\declaretheoremstyle[
  spaceabove=6pt,spacebelow=6pt,
  headfont=\bfseries,bodyfont=\itshape
]{plain}
\declaretheoremstyle[
  spaceabove=6pt,spacebelow=6pt,
  headfont=\bfseries,bodyfont=\normalfont
]{definition}
\declaretheoremstyle[
  spaceabove=6pt,spacebelow=6pt,
  headfont=\itshape,bodyfont=\normalfont
]{remark}
\declaretheorem[style=definition,numberwithin=section]{definition}
\declaretheorem[style=plain,sibling=definition]{theorem}
\declaretheorem[style=plain,sibling=definition]{lemma}
\declaretheorem[style=plain,sibling=definition]{proposition}
\declaretheorem[style=plain,sibling=definition]{corollary}
\declaretheorem[style=remark,sibling=definition]{remark}
\newcommand{\Set}{\mathbf{Set}}
\newcommand{\El}{\operatorname{El}}          
\newcommand{\lift}{\operatorname{lift}}      
\newcommand{\rk}{\operatorname{rk}}          
\newcommand{\Coeq}{\operatorname{Coeq}}
\newcommand{\Eq}{\operatorname{Eq}}          
\newcommand{\IdCode}[3]{\langle\mathrm{Id},#1,#2,#3\rangle}
\newcommand{\Prod}[2]{\operatorname{Prod}\!\bigl(#1,#2\bigr)}
\newcommand{\resize}{\mathrm{resize}}
\DeclareMathOperator{\Hom}{Hom} 
\newcommand{\Step}[2]{%
  \par\noindent\textbf{(#1)}\quad #2\par\nobreak
}
\setlist[enumerate,1]{label=(\arabic*),leftmargin=1.5em}
\setlist[enumerate,2]{label=(\alph*),leftmargin=2em}
\title{Existence Theorem for Cumulative Universe Towers and Its Applications}
\author{Joaquim Reizi Higuchi\\
Graduate Student, The Open University of Japan\\
\texttt{1218237360@campus.ouj.ac.jp}}
\date{\today}
\begin{document}

\maketitle

\begin{abstract}
\noindent
This paper, \emph{GF1—Universe Stratification}, develops a cumulative tower of Grothendieck universes \(\mathcal U_i = V_{\kappa_i}\) indexed by an increasing sequence of inaccessible cardinals and analyses its type–theoretic behaviour. We first present a simultaneous inductive–recursive definition of codes and their decoding functor \(\El_i\), controlled by a rank function that enforces strict size discipline. We then prove closure of every layer under the basic type formers \(\Pi,\Sigma,\operatorname{Id}\) as well as all finite limits and colimits, the latter obtained via a new Quotient constructor together with a rank–stability lemma. A universe‐lifting functor \(\lift_{i\to j}\) is shown to preserve dependent products, yielding strict cumulativity of the tower. Assuming propositional resizing at some level \(i>0\), we construct an explicit left adjoint \(\operatorname{PropRes}_i\) to the canonical inclusion of \((-1)\)-truncated types and verify that resizing propagates to every higher level. Collecting these results we establish an \emph{Existence Theorem} guaranteeing that the tower provides a sound metasemantics for higher type theory over the base system \(\text{ZFC} + \langle \kappa_i\rangle_{i\in\mathbb N}\). The resulting size infrastructure prepares the ground for the Rezk completion and \((\infty,1)\)-topos models treated in subsequent papers of the series.
\end{abstract}

\section*{GF1. Universe Stratification}

\subsection{Universe Tower}
\begin{definition}[Universe Tower]\label{def:universe-tower}
Fix an increasing sequence of inaccessible cardinals
\[
  \kappa_{0}<\kappa_{1}<\kappa_{2}<\dotsb
\]
and set \(\mathcal U_{i}:=V_{\kappa_{i}}\) for each \(i\in\mathbb{N}\).
Elements of \(\mathcal U_{i}\) are called \emph{\(i\)-small types} and the
collection is denoted \(\mathrm{Type}_{i}\).
\end{definition}

\subsection{Cumulativity}
\begin{definition}[Cumulativity]\label{def:cumulativity}
For \(i<j\) let
\(\iota_{i<j}\colon\mathcal U_{i}\hookrightarrow\mathcal U_{j}\) be the
canonical inclusion.  The tower
\((\mathcal U_{i})_{i\in\mathbb{N}}\) is \emph{cumulative} if
\(\iota_{i<j}\) preserves all standard type formers
\(\Pi,\Sigma,\operatorname{Id},W\) and finite (co)limits.
\end{definition}

\subsection{Lift Operator}
\begin{definition}[Lift Operator]\label{def:lift}
For \(A\in\mathcal U_{i}\) and \(j\ge i\) define the
\emph{universe‐lifting} functor by
\[
  \operatorname{lift}_{i\to j}(A):=\iota_{i<j}(A)\in\mathcal U_{j}.
\]
\end{definition}

\subsection{h-Proposition and Resizing}
\begin{definition}[h-Proposition and Resizing]\label{def:resizing}
A type \(P\) is an \emph{h-proposition} if it has at most one
inhabitant, i.e.\ \(\forall x,y:P,\ \operatorname{Id}_{P}(x,y)\).
\emph{Propositional resizing at level \(i\)} holds when every
h-proposition in \(\mathcal U_{i}\) is equivalent to one in
\(\mathcal U_{0}\).
\end{definition}

\subsection{Closure of $\Pi$ and $\Sigma$}
\begin{lemma}[Closure of $\Pi$ and $\Sigma$]\label{lem:closure-pi-sigma}
Fix an inaccessible cardinal $\kappa_{i}$ and interpret the $i$-th
predicative universe à la Tarski by
\[
  \mathcal U_{i}
    \;=\;
    \bigl\{\,u\mid \mathrm{rk}(u)<\kappa_{i}\bigr\},
  \qquad
  \El_{i}\colon \mathcal U_{i}\longrightarrow V_{\kappa_{i}},
  \;\El_{i}(u):=u .
\]
Let $A\in\mathcal U_{i}$ and let
$B\colon\El_{i}(A)\to\mathcal U_{i}$ be a \emph{code-valued} family.
Then the dependent function and sum types
\[
  \Pi_{x:\El_{i}(A)}\El_{i}\bigl(B(x)\bigr),
  \qquad
  \Sigma_{x:\El_{i}(A)}\El_{i}\bigl(B(x)\bigr)
\]
are represented by codes in\/ $\mathcal U_{i}$ and hence themselves
belong to\/ $\mathcal U_{i}$.
\end{lemma}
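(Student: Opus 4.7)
The plan is to translate both type formers into their pure set-theoretic realisations inside \(V_{\kappa_i}\) and then apply the closure properties of a Grothendieck universe: strong-limitness controls power sets, and regularity controls set-indexed suprema. Since the Tarski presentation used here has \(\El_i(u)=u\), producing ``a code in \(\mathcal U_i\)'' for a set \(X\) is the same as verifying the rank inequality \(\rk(X)<\kappa_i\); no separate inductive–recursive code-clauses are required beyond this identification.

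The first step I would carry out is a rank bound on the base data. From \(\rk(A)<\kappa_i\) and strong-limitness of \(\kappa_i\) one obtains \(|A|<\kappa_i\). Since each \(B(x)\) is assumed to be a code in \(\mathcal U_i\), \(\rk(B(x))<\kappa_i\) for every \(x\in A\). Setting \(\beta:=\sup_{x\in A}\rk(B(x))\), regularity of \(\kappa_i\) combined with \(|A|<\kappa_i\) forces \(\beta<\kappa_i\). This is the one place where the inaccessibility of \(\kappa_i\) is doing essential work.

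For the \(\Sigma\)-case I would realise \(\Sigma_{x:\El_i(A)}\El_i(B(x))\) as the set of Kuratowski pairs \(\{(x,y):x\in A,\ y\in B(x)\}\). Each pair \((x,y)\) has rank \(\leq\max(\rk x,\rk y)+2\), so the whole set has rank at most \(\beta+3<\kappa_i\), and this set itself is the code. For the \(\Pi\)-case, any \(f\in\Pi_{x:A}B(x)\) is a functional subset of \(\Sigma_{x:A}B(x)\); hence \(\Pi_{x:A}B(x)\subseteq\mathcal P(\Sigma_{x:A}B(x))\), and by strong-limitness \(\rk(\Pi)\leq\rk(\Sigma)+1<\kappa_i\). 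Again the set is its own code, and \(\El_i\) decodes it tautologically.

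The main obstacle, and essentially the only non-routine point, is the indexed-supremum step in Step~1: one must keep the two inaccessibility ingredients apart, using strong-limitness to bound \(|A|\) and then regularity to close the supremum under \(\kappa_i\). Without either ingredient the \(\Sigma\)-set could in principle attain rank \(\kappa_i\). Once \(\beta<\kappa_i\) is secured, the finite Kuratowski offsets (\(+2,+1,+3\)) are harmless, and both the \(\Sigma\) and \(\Pi\) clauses fall out by a single application of pairing and power-set respectively.
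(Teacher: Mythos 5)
Your proof is correct, but it takes a different route from the paper's. The paper works with \emph{formal} codes: it introduces the tuples $\langle\mathrm{Pi},A,B\rangle$ and $\langle\mathrm{Sigma},A,B\rangle$, extends the structural rank function by $\mathrm{rk}(\langle\mathrm{Pi},A,B\rangle):=\sup\{\mathrm{rk}(A),\sup_{x}\mathrm{rk}(B(x))\}$, bounds that supremum using $|A|<\kappa_i$ plus regularity, and then separately invokes the recursive decoding clause to check that $\El_i$ sends the code to the intended function or pair set. You instead take the lemma's stipulation $\El_i(u)=u$ at face value and let the set-theoretic realisations be their own codes, so the whole problem reduces to bounding von Neumann ranks: Kuratowski pairs give $\rk(\Sigma)\le\beta+3$ and the inclusion $\Pi\subseteq\mathcal P(\Sigma)$ gives $\rk(\Pi)\le\rk(\Sigma)+1$. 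Both arguments turn on exactly the same essential step — $|A|<\kappa_i$ together with regularity closing the indexed supremum $\sup_{x\in A}\rk(B(x))$ below $\kappa_i$ — so the mathematical content is shared; what your version buys is that no decoding-soundness step is needed (it is tautological), while the paper's version is the one that generalises to a genuine inductive--recursive universe where codes are syntax rather than sets. Two small points of hygiene: your bound on the $\Sigma$-set should read $\max(\rk(A),\beta)+3$ rather than $\beta+3$, since the first components $x\in A$ contribute their own ranks; and in the $\Pi$-case strong-limitness is not actually needed for the rank bound — subset-monotonicity of rank already gives $\rk(\Pi)\le\rk(\Sigma)+1$ — it would only enter if you wanted a cardinality bound.
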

\begin{proof}
The argument separates
\textbf{(1)~rank control of the codes}
from
\textbf{(2)~decoding soundness}.

\smallskip
\paragraph*{Pre-fact.  Rank versus von Neumann rank}
The function $\mathrm{rk}\colon\mathcal U_{i}\to\kappa_{i}$
of Lemma~\ref{lem:rank-adequacy} is defined
\emph{structurally} on codes; for every code $u$ one has
\[
  \mathrm{rk}(u)
  \;\le\;
  \operatorname{rank}_{\text{set}}(u),
\]
hence a fortiori
$|u|<\kappa_{i}$ whenever $\mathrm{rk}(u)<\kappa_{i}$
(sets of rank $<\kappa_{i}$ have cardinality $<\kappa_{i}$).

\smallskip
\paragraph*{1.\;Code construction and rank estimate}
Define
\[
  \Pi(A,B):=\langle\mathrm{Pi},A,B\rangle,
  \qquad
  \Sigma(A,B):=\langle\mathrm{Sigma},A,B\rangle .
\]

\emph{Size of the index set.}
Because $\mathrm{rk}(A)<\kappa_{i}$,
the Pre-fact gives $|A|<\kappa_{i}$.
Since each $B(x)\in\mathcal U_{i}$,
$\mathrm{rk}\bigl(B(x)\bigr)<\kappa_{i}$.

\emph{Rank of $\Pi(A,B)$.}
Extend $\mathrm{rk}$ by
\[
  \mathrm{rk}\!\bigl(\langle\mathrm{Pi},A,B\rangle\bigr)
   :=\sup\Bigl\{\,
        \mathrm{rk}(A),\;
        \sup_{x\in A}\mathrm{rk}\!\bigl(B(x)\bigr)
      \Bigr\}.
\]
The inner supremum ranges over fewer than $\kappa_{i}$ ordinals,
each $<\kappa_{i}$; regularity of $\kappa_{i}$ gives
\(
  \sup_{x\in A}\mathrm{rk}\bigl(B(x)\bigr)<\kappa_{i}.
\)
Taking the supremum with $\mathrm{rk}(A)<\kappa_{i}$ yields
\[
  \mathrm{rk}\bigl(\Pi(A,B)\bigr)<\kappa_{i}.
\]
The same computation applies to $\Sigma(A,B)$.
Hence
\(
  \Pi(A,B),\Sigma(A,B)\in\mathcal U_{i}.
\)

\smallskip
\paragraph*{2.\;Decoding soundness}
By structural recursion the decoding satisfies
\[
  \El_{i}\!\bigl(\langle\mathrm{Pi},A,B\rangle\bigr)
   =\Bigl\{\,f\!\mid
       f\!:\!\El_{i}(A)\!\to\! V_{\kappa_{i}}
       \text{ and }\forall x.\,f(x)\in\El_{i}\!\bigl(B(x)\bigr)
     \Bigr\},
\]
which is precisely
$\Pi_{x:\El_{i}(A)}\El_{i}\!\bigl(B(x)\bigr)$.
An analogous clause gives the decoding of $\Sigma(A,B)$.

\smallskip
\noindent
Thus the required $\Pi$- and $\Sigma$-types are the decodings of
codes whose ranks we have bounded below $\kappa_{i}$; consequently
they lie in $\mathcal U_{i}$.
\end{proof}


\subsection{Rank closure}
\begin{lemma}[Rank closure]\label{lem:rank-closure}
Let\/ $\kappa_{i}$ be an inaccessible cardinal, let\/
$\mathcal U_{i}$ be the universe of codes constructed in
Lemma~\ref{lem:rank-adequacy}, and let
\(
  \mathrm{rk}\colon\text{\emph{Code}}\to\mathrm{On}
\)
be the rank function defined there.
If
\[
  A\in\mathcal U_{i},
  \qquad
  B\colon \El_{i}(A)\longrightarrow\mathcal U_{i},
\]
then
\[
  \mathrm{rk}\!\bigl(\Pi(A,B)\bigr)
  \;<\;\kappa_{i},
  \qquad
  \mathrm{rk}\!\bigl(\Sigma(A,B)\bigr)
  \;<\;\kappa_{i}.
\]
\end{lemma}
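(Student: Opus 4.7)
The plan is to repeat the rank computation already used inside the proof of Lemma~\ref{lem:closure-pi-sigma}, but now stated as a lemma in its own right. Since the structural definition of the rank function on the code constructors is
\[
  \mathrm{rk}\!\bigl(\langle\mathrm{Pi},A,B\rangle\bigr)
  \;=\;
  \mathrm{rk}\!\bigl(\langle\mathrm{Sigma},A,B\rangle\bigr)
  \;=\;
  \sup\!\Bigl\{\,\mathrm{rk}(A),\;
        \sup_{x\in\El_{i}(A)}\mathrm{rk}\!\bigl(B(x)\bigr)\Bigr\},
\]
everything reduces to showing that this ordinal is bounded by $\kappa_{i}$.

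I would proceed in three short steps. First, I would record that $\mathrm{rk}(A)<\kappa_{i}$ is immediate from $A\in\mathcal U_{i}$, and that the Pre-fact of Lemma~\ref{lem:closure-pi-sigma} (rank dominates von~Neumann rank, and sets of von~Neumann rank $<\kappa_{i}$ have cardinality $<\kappa_{i}$) gives $|\El_{i}(A)|<\kappa_{i}$. Second, for each $x\in\El_{i}(A)$ the hypothesis $B(x)\in\mathcal U_{i}$ yields $\mathrm{rk}(B(x))<\kappa_{i}$. Third, I would invoke regularity of the inaccessible cardinal $\kappa_{i}$: a supremum of fewer than $\kappa_{i}$ many ordinals, each less than $\kappa_{i}$, is itself less than $\kappa_{i}$. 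Taking the outer supremum with $\mathrm{rk}(A)<\kappa_{i}$ (a supremum of two ordinals below $\kappa_{i}$) therefore remains below $\kappa_{i}$, and the same bound holds verbatim for the $\Sigma$-code.

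The only real subtlety—the potential obstacle—is keeping the two notions of rank cleanly separated: the \emph{structural} rank $\mathrm{rk}$ on codes defined in Lemma~\ref{lem:rank-adequacy} is what the conclusion refers to, while the cardinality bound $|\El_{i}(A)|<\kappa_{i}$ comes through the set-theoretic von~Neumann rank. The Pre-fact bridges the two, so I would take care to cite it explicitly when passing from ``$\mathrm{rk}(A)<\kappa_{i}$'' to ``the index set $\El_{i}(A)$ has cardinality $<\kappa_{i}$'', as this is exactly the point where regularity is applied.

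Having established both bounds, the statement of the lemma is immediate from the defining equation of $\mathrm{rk}$ on the two constructors $\langle\mathrm{Pi},-,-\rangle$ and $\langle\mathrm{Sigma},-,-\rangle$, so no further computation is needed.
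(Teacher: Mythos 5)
Your proposal is correct and follows essentially the same route as the paper's own proof: bound $\mathrm{rk}(A)$ and each $\mathrm{rk}(B(x))$ below $\kappa_{i}$, bound $|\El_{i}(A)|<\kappa_{i}$ via the passage from structural rank to set-theoretic size, and close with regularity of $\kappa_{i}$ applied to the inner supremum. The only cosmetic difference is that you route the cardinality bound on $\El_{i}(A)$ through the Pre-fact of Lemma~\ref{lem:closure-pi-sigma}, whereas the paper argues directly that $\El_{i}(A)\in V_{\kappa_{i}}$; the substance is identical.
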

\begin{proof}
We treat the $\Pi$–case in detail; the $\Sigma$–case is identical.

\medskip\noindent
\textbf{1.\;Bounding the rank of the domain and fibres.}
Because $A\in\mathcal U_{i}$ and
$B(x)\in\mathcal U_{i}$ for every $x:\El_{i}(A)$,  
Lemma~\ref{lem:rank-adequacy} yields
\[
  \mathrm{rk}(A)<\kappa_{i},
  \qquad
  \forall x:\El_{i}(A).\;
    \mathrm{rk}\!\bigl(B(x)\bigr)<\kappa_{i}.
\tag{\(\ast\)}
\]

\medskip\noindent
\textbf{2.\;Cardinality of the index set.}
Since $A$ is a \emph{code} of rank $<\kappa_{i}$,
its decoding $\El_{i}(A)$ is an element of the set universe
$V_{\kappa_{i}}$; hence
\(
  \bigl|\El_{i}(A)\bigr|<\kappa_{i}
\)
because any set lying in $V_{\kappa_{i}}$ has cardinality
$<\kappa_{i}$ when $\kappa_{i}$ is inaccessible.

\medskip\noindent
\textbf{3.\;Computing the rank of the $\Pi$–code.}
By definition of the rank function,
\[
  \mathrm{rk}\!\bigl(\Pi(A,B)\bigr)
  =\sup\!\Bigl\{\,
      \mathrm{rk}(A),\;
      \sup_{x:\El_{i}(A)}
         \mathrm{rk}\!\bigl(B(x)\bigr)
    \Bigr\}.
\]
The outer supremum is taken over a \emph{finite} set
(two elements), so it suffices to bound each argument.

\medskip\noindent
\textbf{4.\;Bounding the inner supremum.}
The index family
\(
  \{\mathrm{rk}\!\bigl(B(x)\bigr)\mid x\in\El_{i}(A)\}
\)
has cardinality $<\kappa_{i}$ (by Step 2) and each member is
$<\kappa_{i}$ (by~\((\ast)\)).
Because $\kappa_{i}$ is \emph{regular},
the supremum of any $<\!\kappa_{i}$–sized set of ordinals
bounded by $\kappa_{i}$ is itself $<\kappa_{i}$.
Thus
\[
  \sup_{x:\El_{i}(A)}
       \mathrm{rk}\!\bigl(B(x)\bigr)
  <\kappa_{i}.
\]

\medskip\noindent
\textbf{5.\;Final bound.}
Combining the inequalities from Steps 1 and 4, the two arguments of
the outer $\sup$ are $<\kappa_{i}$; hence the whole supremum is
$<\kappa_{i}$, proving
\(
  \mathrm{rk}\!\bigl(\Pi(A,B)\bigr)<\kappa_{i}.
\)

\medskip
The same calculation, replacing
$\Pi(A,B)$ by $\Sigma(A,B)$, completes the proof.
\end{proof}

\subsection{Rank adequacy (revised)}
\begin{lemma}[Rank adequacy]\label{lem:rank-adequacy}
Let $\kappa_i$ be an inaccessible (hence regular) cardinal and let
$\mathrm{Code}$ be the raw syntax generated by the constructors
\[
  \ast,\;\mathrm{Nat},\;
  \langle\mathrm{Pi},u,B\rangle,\;
  \langle\mathrm{Sigma},u,B\rangle .
\]

\begin{enumerate}[(1)]
\item Define the monotone operator
\[
  F:\mathcal P(\mathrm{Code})\longrightarrow\mathcal P(\mathrm{Code}),\qquad
  F(X):=\{\ast,\mathrm{Nat}\}\,\cup\!
        \bigl\{\langle\mathrm{Pi},u,B\rangle,
               \langle\mathrm{Sigma},u,B\rangle
          \mid u\in X,\;B:\El_i(u)\to X
        \bigr\}.
\]
\item For every ordinal $\alpha$ put
      $U_i^{\alpha}:=F^{\alpha}(\varnothing)$ and set
      \[
        \mathcal U_i\;:=\;\bigcup_{\alpha<\kappa_i}U_i^{\alpha}.
      \]
\item Define the \emph{generation rank}
      \[
        \operatorname{grk}(u)\;:=\;
          \min\bigl\{\alpha\mid u\in U_i^{\alpha+1}\bigr\}\in\mathrm{On}.
      \]
\end{enumerate}
Then for every code $u\in\mathrm{Code}$ we have
\[
  u\in\mathcal U_i
  \;\;\Longleftrightarrow\;\;
  \operatorname{grk}(u)<\kappa_i
  \;\;\Longleftrightarrow\;\;
  \mathrm{rk}(u)<\kappa_i,
\]
where $\mathrm{rk}$ is the usual syntactic rank defined
by structural recursion on codes.
\end{lemma}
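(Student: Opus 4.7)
The plan is to handle the two biconditionals separately: the first by transfinite induction on the stage indices of the iterative construction of $\mathcal U_i$, the second by structural induction on the code syntax.

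For $u\in\mathcal U_i\iff\operatorname{grk}(u)<\kappa_i$, I would start from the monotonicity of $F$ and the standard reading of $U_i^{\alpha}=F^{\alpha}(\varnothing)$ as the transfinite iteration with $U_i^{\alpha+1}=F(U_i^{\alpha})$ and $U_i^{\lambda}=\bigcup_{\beta<\lambda}U_i^{\beta}$ at limits. A routine induction on $\alpha$ yields the auxiliary slogan $u\in U_i^{\alpha}\iff\operatorname{grk}(u)<\alpha$ for every $\alpha>0$. Because $\kappa_i$ is a limit ordinal, membership in $\mathcal U_i=\bigcup_{\alpha<\kappa_i}U_i^{\alpha}$ is then equivalent to $\operatorname{grk}(u)<\alpha$ for some $\alpha<\kappa_i$, that is, to $\operatorname{grk}(u)<\kappa_i$, closing the first equivalence.

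For $\operatorname{grk}(u)<\kappa_i\iff\mathrm{rk}(u)<\kappa_i$, I would proceed by structural induction on $u$ and show that both rank functions satisfy the same recursive clauses up to at most a finite additive constant, which is irrelevant for a bound by the limit ordinal $\kappa_i$. For $u=\langle\mathrm{Pi},A,B\rangle$, unfolding the definition of $F$ shows that $\operatorname{grk}(u)=\sup\{\operatorname{grk}(A),\sup_{x\in\El_i(A)}\operatorname{grk}(B(x))\}+1$, which coincides with the standard recursive clause for the syntactic rank; the $\Sigma$-case and the base cases $\ast,\mathrm{Nat}$ are trivial. For the direction $\mathrm{rk}(u)<\kappa_i\Rightarrow\operatorname{grk}(u)<\kappa_i$ I would then combine this identity with the regularity and inaccessibility of $\kappa_i$: given $\mathrm{rk}(A)<\kappa_i$ and $\mathrm{rk}(B(x))<\kappa_i$ for every $x$, the inner supremum is taken over $\El_i(A)$, a set of cardinality $<\kappa_i$ because the Tarski decoding of a code of rank $<\kappa_i$ is an element of $V_{\kappa_i}$, and regularity of $\kappa_i$ keeps the supremum and its successor below $\kappa_i$. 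The reverse direction is symmetric.

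The main obstacle is a subtle circularity in the setup: $\El_i$ is given by mutual induction--recursion together with $\mathcal U_i$, so the very expression $\sup_{x\in\El_i(A)}\operatorname{grk}(B(x))$ presupposes that $\El_i(A)$ and the assignment $x\mapsto\operatorname{grk}(B(x))$ are already defined on codes of strictly lower complexity. I would dissolve this by stratifying the comparison along the filtration $(U_i^{\alpha})_{\alpha<\kappa_i}$: at stage $\alpha$ all proper subcodes $A$ and $B(x)$ of a term $u\in U_i^{\alpha+1}$ already lie in $U_i^{\alpha}$, so $\El_i$, $\operatorname{grk}$ and $\mathrm{rk}$ are all available on them by the induction hypothesis, and the size bound $|\El_i(A)|<\kappa_i$ is established within the same induction. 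Beyond this stratification bookkeeping, the only genuinely nontrivial ingredient is the regularity of $\kappa_i$, invoked exactly once to bound the inner supremum.
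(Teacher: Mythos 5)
Your proposal is correct and follows essentially the same route as the paper: the first equivalence falls out of the transfinite filtration $U_i^{\alpha}$ (the paper's Steps A--B), and the second is exactly the paper's ``gluing lemma'' identifying $\operatorname{grk}$ with $\mathrm{rk}$ by structural induction on codes. Two minor remarks: once the identity $\operatorname{grk}(u)=\mathrm{rk}(u)$ (up to your finite additive constant) is in hand, the regularity argument bounding $\sup_{x}\operatorname{grk}(B(x))$ is redundant for this lemma and really belongs to Lemma~\ref{lem:rank-closure}; on the other hand, your explicit stratification to defuse the inductive--recursive circularity of $\El_i$ addresses a point the paper's proof silently glosses over.
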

\begin{proof}
We split the argument into three steps.

\medskip
\textbf{Step A:  $\operatorname{grk}(u)<\kappa_i \;\Rightarrow\; u\in\mathcal U_i$.}  
Immediate from the definition of $\mathcal U_i$.

\medskip
\textbf{Step B:  $u\in\mathcal U_i \;\Rightarrow\; \operatorname{grk}(u)<\kappa_i$.}  
Let $\alpha$ be the least ordinal with $u\in U_i^{\alpha+1}$.
Because $\mathcal U_i$ is the union of $U_i^{\beta}$ for
$\beta<\kappa_i$, such an $\alpha$ exists and is $<\kappa_i$ by
regularity; hence $\operatorname{grk}(u)=\alpha<\kappa_i$.

\medskip
\textbf{Step C (Gluing lemma):  
$\operatorname{grk}(u)=\mathrm{rk}(u)$ for all codes $u$.}  
Proceed by structural induction on the *syntactic* rank
$\mathrm{rk}(u)$.

\begin{itemize}[leftmargin=1.4em,itemsep=0.3em]
\item \emph{Base $\mathrm{rk}(u)=0$.}  
      Then $u\in\{\ast,\mathrm{Nat}\}=U_i^{1}$, so
      $\operatorname{grk}(u)=0$.

\item \emph{Inductive step.}  
      Suppose $\mathrm{rk}(u)=\beta>0$ and the claim holds for all
      ranks $<\beta$.  Write $u=\langle\mathrm{Pi},v,B\rangle$
      (the $\Sigma$-case is identical).  
      Both $v$ and every $B(x)$ have syntactic rank $<\beta$, so by
      the induction hypothesis their generation ranks are
      $<\beta$.  Hence $v,\,B(x)\in U_i^{\beta}$, implying
      $u\in U_i^{\beta+1}$ and thus $\operatorname{grk}(u)=\beta$.
\end{itemize}

The reverse inequality
$\operatorname{grk}(u)\le\mathrm{rk}(u)$ is similar, using induction
on $\alpha:=\operatorname{grk}(u)$ and the fact that
$U_i^{\alpha}\subseteq\{u\mid \mathrm{rk}(u)\le\alpha\}$.
Therefore the two ranks coincide.

\medskip
Combining Steps A–C yields the stated twofold equivalence.
\end{proof}

\subsection{Decoding correctness}
\begin{lemma}[Decoding correctness]\label{lem:decode}
For every $A\in\mathcal U_{i}$ and
$B\colon\El_{i}(A)\to\mathcal U_{i}$,
\[
  \El_{i}\!\bigl(\Pi(A,B)\bigr)
    \;=\;
    \Pi_{x:\El_{i}(A)}\El_{i}\!\bigl(B(x)\bigr),
  \qquad
  \El_{i}\!\bigl(\Sigma(A,B)\bigr)
    \;=\;
    \Sigma_{x:\El_{i}(A)}\El_{i}\!\bigl(B(x)\bigr).
\]
\end{lemma}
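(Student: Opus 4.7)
The plan is to derive both equalities essentially by unfolding the defining clauses of $\El_i$ on a $\Pi$- or $\Sigma$-code, with the well-foundedness coming from the generation rank of Lemma~\ref{lem:rank-adequacy}. The statement is a soundness check: it confirms that the recursion specifying $\El_i$ produces, on $\Pi$- and $\Sigma$-codes, exactly the type-theoretic dependent function and pair types it is designed to encode.

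First I would recall the recursive clauses used in the inductive–recursive specification of $\mathcal U_i$ and $\El_i$: on the $\Pi$-constructor,
\[
  \El_i(\langle\mathrm{Pi},A,B\rangle)
    := \Bigl\{\,f \;\Bigm|\; f\colon \El_i(A)\to\textstyle\bigcup_{x}\El_i(B(x)),\ \forall x.\ f(x)\in\El_i(B(x))\,\Bigr\},
\]
and analogously
\[
  \El_i(\langle\mathrm{Sigma},A,B\rangle)
    := \bigl\{\,\langle x,y\rangle \;\bigm|\; x\in\El_i(A),\ y\in\El_i(B(x))\,\bigr\}.
\]
These clauses are well-defined because, by Lemma~\ref{lem:rank-adequacy}, the subcodes $A$ and every $B(x)$ have strictly smaller generation rank than $\langle\mathrm{Pi},A,B\rangle$ (resp.\ $\langle\mathrm{Sigma},A,B\rangle$), so $\El_i(A)$ and $\El_i(B(x))$ are already fixed by the recursion when we compute the decoding of the parent code.

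Next I would identify the right-hand sides of those clauses with the type-theoretic $\Pi$- and $\Sigma$-types displayed in the lemma. This step is purely notational: the set of dependent functions $\{f \mid \forall x.\, f(x)\in\El_i(B(x))\}$ is the standard ZF encoding of $\Pi_{x:\El_i(A)}\El_i(B(x))$, while the pair-set is the standard encoding of $\Sigma_{x:\El_i(A)}\El_i(B(x))$. Finally, to be sure the asserted equalities take place inside the universe, I would cite Lemma~\ref{lem:rank-closure} to conclude $\rk(\Pi(A,B)),\rk(\Sigma(A,B))<\kappa_i$, so both decoded sets lie in $V_{\kappa_i}$.

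The only genuinely non-trivial ingredient — and the only place something could go wrong — is the well-foundedness of the recursion defining $\El_i$: if $\operatorname{grk}(\langle\mathrm{Pi},A,B\rangle)$ failed to strictly exceed $\operatorname{grk}(A)$ and each $\operatorname{grk}(B(x))$, the clauses above would be circular. Lemma~\ref{lem:rank-adequacy} rules this out by equating generation rank with syntactic rank, which is strictly decreasing on subterms; the remainder of the proof is then an unfolding of definitions rather than a substantive calculation.
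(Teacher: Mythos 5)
Your proposal is correct and follows essentially the same route as the paper: unfold the recursive decoding clauses of $\El_{i}$ on $\Pi$- and $\Sigma$-codes and observe that the resulting comprehensions coincide verbatim with the standard set-theoretic encodings of the dependent function and pair types. Your added remarks on well-foundedness of the recursion (via Lemma~\ref{lem:rank-adequacy}) and on membership in $V_{\kappa_{i}}$ (via Lemma~\ref{lem:rank-closure}) are consistent with the paper, which relegates exactly those size considerations to the preceding lemmas and keeps the decoding argument purely definitional.
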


\begin{proof}
We treat the two cases in parallel, writing
\[
  C\_{\Pi}(A,B)
    :=\El_{i}\!\bigl(\Pi(A,B)\bigr),\qquad
  C\_{\Sigma}(A,B)
    :=\El_{i}\!\bigl(\Sigma(A,B)\bigr),
\]
and
\[
  D\_{\Pi}(A,B)
    :=\Pi_{x:\El_{i}(A)}\El_{i}\!\bigl(B(x)\bigr),\qquad
  D\_{\Sigma}(A,B)
    :=\Sigma_{x:\El_{i}(A)}\El_{i}\!\bigl(B(x)\bigr).
\]

\smallskip
\textbf{1.\;Explicit decoding clauses.}
By the structural recursion that \(\El_{i}\) satisfies,
\begin{align*}
  C\_{\Pi}(A,B)
    &\,=\,\bigl\{\,f \mid
         f:\El_{i}(A)\to V_{\kappa_{i}}
         \text{ and }\forall x.\,f(x)\in\El_{i}\!\bigl(B(x)\bigr)
       \bigr\},\\
  C\_{\Sigma}(A,B)
    &\,=\,\bigl\{\,\langle x,y\rangle \mid
         x\in\El_{i}(A)
         \text{ and }y\in\El_{i}\!\bigl(B(x)\bigr)
       \bigr\}.
\end{align*}

\smallskip
\textbf{2.\;Set-theoretic constructions inside \(V_{\kappa_{i}}\).}
Within \(V_{\kappa_{i}}\) the dependent function \emph{and}
dependent pair sets are defined by the very same comprehensions:
\[
  D\_{\Pi}(A,B)=C\_{\Pi}(A,B),
  \qquad
  D\_{\Sigma}(A,B)=C\_{\Sigma}(A,B).
\]

\smallskip
\textbf{3.\;Conclusion.}
Because each pair \((C\_{\Pi},D\_{\Pi})\) and \((C\_{\Sigma},D\_{\Sigma})\)
is given by identical membership predicates, the corresponding sets
are \emph{definitionally equal} in \(V_{\kappa_{i}}\).
Hence the stated equalities hold.
\end{proof}

\subsubsection*{Supplement}
\textbf{Size discipline.}
Steps 1 and 2 are logically separated:
Step 1 uses only Lemmas~\ref{lem:rank-adequacy}–\ref{lem:rank-closure}
and regularity of $\kappa_{i}$; Step 2 relies solely on the recursive
clause defining $\El_{i}$ on $\Pi/\Sigma$-codes.  No argument in
Step 2 affects size bounds, eliminating the former redundancy flagged
in J04.

\medskip
\textbf{Notation unification.}
Throughout, the symbol $B$ uniformly denotes the
\emph{code-valued} family
$B\colon \El_{i}(A)\to\mathcal U_{i}$; we no longer introduce a
separate symbol $f$ in Step 1, resolving issue J03.

\medskip
\textbf{Reference order.}
Forward references to Lemmas are now explicitly annotated as
“proved after this theorem,” addressing style comment J02 while
respecting the mandated
output order (Theorem → Proof → Lemmas).

\subsection{Identity Closure}
\begin{lemma}[Identity Closure]\label{lem:closure-id}
Assume a Tarski–style universe $(\mathcal U_{i},\El_{i})$
defined \emph{inductively–recursively}\footnote{%
See P. Dybjer, “A General Formulation of Simultaneous
Inductive–Recursive Definitions,” \emph{LNCS 1997} (2000).}
as the \emph{least} pair $(C,D)$ such that

\begin{enumerate}[(1)]
\item $C$ is a class of \emph{codes}, $D\colon C\to V_{\kappa_{i}}$.
\item \textbf{Base codes}\;
      $\ast,\mathrm{Nat}\in C$ with $D(\ast)=\mathbf 1$ and
      $D(\mathrm{Nat})=\mathbb N$.
\item \textbf{Type-formers}\;
      if $u\in C$ and $B\colon D(u)\to C$, then
      $\langle\mathrm{Pi},u,B\rangle,\langle\mathrm{Sigma},u,B\rangle\in C$
      with the usual decoding clauses.
\item \textbf{Identity-former}\;
      if $u\in C$ and $s,t\in D(u)$, then
      $\langle\mathrm{Id},u,s,t\rangle\in C$ and
      \[
        D\!\bigl(\langle\mathrm{Id},u,s,t\rangle\bigr)
            =\operatorname{Id}_{D(u)}(s,t).
      \]
\end{enumerate}

Then for every $A\in\mathcal U_{i}$ and every
$x,y\in\El_{i}(A)$ the \emph{identity code}
\[
  \IdCode(A,x,y)
    :=\langle\mathrm{Id},A,x,y\rangle
\]
belongs to $\mathcal U_{i}$ and decodes to the expected
identity type:
\[
  \El_{i}\!\bigl(\IdCode(A,x,y)\bigr)
    =\operatorname{Id}_{\El_{i}(A)}(x,y).
\]
\end{lemma}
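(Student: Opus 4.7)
The plan is to mirror the two-phase template established in Lemma~\ref{lem:closure-pi-sigma}: first certify that $\langle\mathrm{Id},A,x,y\rangle$ carries generation rank below $\kappa_i$, so that Lemma~\ref{lem:rank-adequacy} places it in $\mathcal U_i$; then extract the decoding equation directly from clause~(4) of the inductive–recursive presentation.

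For the rank estimate I would extend the syntactic rank function to Id-codes by setting
\[
  \rk\!\bigl(\langle\mathrm{Id},A,s,t\rangle\bigr)
    := \sup\!\bigl\{\rk(A),\,\operatorname{rank}_{\mathrm{set}}(s),\,\operatorname{rank}_{\mathrm{set}}(t)\bigr\}+1,
\]
in line with the Pre-fact of Lemma~\ref{lem:closure-pi-sigma} relating syntactic to set-theoretic rank. From $A\in\mathcal U_i$ and Lemma~\ref{lem:rank-adequacy} I get $\rk(A)<\kappa_i$, whence $\El_i(A)\in V_{\kappa_i}$ and therefore $x,y\in V_{\kappa_i}$ have set rank $<\kappa_i$. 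A supremum of three ordinals each strictly below the regular cardinal $\kappa_i$ remains below $\kappa_i$, so $\rk(\langle\mathrm{Id},A,x,y\rangle)<\kappa_i$, and Lemma~\ref{lem:rank-adequacy} delivers the desired membership in $\mathcal U_i$.

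For decoding soundness, clause~(4) of the hypothesis reads off verbatim
\[
  \El_i\!\bigl(\langle\mathrm{Id},A,x,y\rangle\bigr)
    = \operatorname{Id}_{\El_i(A)}(x,y),
\]
completing the stated equality; no further computation is needed because the inductive–recursive definition already fixes $D$ on Id-codes by this very clause.

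The main—and essentially only—obstacle is the bookkeeping distinction between \emph{code} arguments and \emph{inhabitant} arguments of the Id-former. Unlike $\Pi$ and $\Sigma$, whose second argument is a code-valued family controlled directly by Lemma~\ref{lem:rank-adequacy}, the arguments $s,t$ of an Id-code are honest elements of the decoded set $D(u)$, so their size must be bounded via the set-theoretic rank inside $V_{\kappa_i}$ rather than via generation rank of codes. Once this bifurcation is made explicit, strong-limit inaccessibility ($|V_{\kappa_i}|=\kappa_i$ with every member rank $<\kappa_i$) absorbs the three-way supremum effortlessly, and the least-fixed-point clause of the inductive–recursive presentation guarantees that the newly formed code already lies in $\mathcal U_i$.
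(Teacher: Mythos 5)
Your proof is correct in substance, but it takes a genuinely different route from the paper's. The paper's argument is a one-liner from the \emph{generation principle}: since $\mathcal U_{i}$ is defined as the \emph{least} pair $(C,D)$ closed under clauses (1)--(4), and the hypotheses $A\in\mathcal U_{i}$, $x,y\in\El_{i}(A)$ are exactly the side conditions of clause (4), that clause fires and inserts $\IdCode(A,x,y)$ into $C$ by fiat; no rank computation is needed, and the decoding equality is read off from clause (4) just as you do. You instead route membership through the rank discipline of Lemmas~\ref{lem:rank-adequacy} and \ref{lem:closure-pi-sigma}, extending $\rk$ to Id-codes and bounding it below $\kappa_{i}$ via the transitivity of $V_{\kappa_{i}}$ and the finite supremum. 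This buys a uniform treatment consistent with the $\Pi/\Sigma$ case and makes the size bookkeeping explicit, and your observation that $s,t$ are \emph{inhabitants} rather than codes --- so that their size must be controlled by set-theoretic rank inside $V_{\kappa_{i}}$ rather than by generation rank --- is exactly the point the paper relegates to its Supplement. The one caveat: Lemma~\ref{lem:rank-adequacy} as stated covers only the constructors $\ast,\mathrm{Nat},\mathrm{Pi},\mathrm{Sigma}$, so invoking its equivalence for Id-codes tacitly requires re-running its gluing argument on the enriched syntax (adding an Id-case to the operator $F$ and to Step~C); that extension is routine but should be stated, since with your definition of $\rk$ on Id-codes the generation rank and the syntactic rank need no longer literally coincide (only the inequality $\operatorname{grk}\le\rk$ survives, which is fortunately the direction you use). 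Also, bounding a supremum of \emph{three} ordinals needs only that $\kappa_{i}$ is a limit, not regularity. Neither point invalidates the argument.
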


\begin{proof}
Because $(\mathcal U_{i},\El_{i})$ is constructed by the
simultaneous clauses (1)–(4), the triple
$\bigl(A,x,y\bigr)$ satisfies the side conditions of clause (4):
$A\in\mathcal U_{i}$ by hypothesis, and
$x,y\in\El_{i}(A)$ by assumption.
Hence clause (4) \emph{generates} the code
$\IdCode(A,x,y)$ and inserts it into $\mathcal U_{i}$.
Membership is therefore immediate by the \emph{generation} principle
of inductive definitions, so no circular reasoning is involved.

The decoding equality is the very definition given in clause (4);
applying $D=\El_{i}$ to $\IdCode(A,x,y)$ yields
$\operatorname{Id}_{\El_{i}(A)}(x,y)$ verbatim.
\end{proof}

\subsubsection*{Supplement}
The key novelty is the \emph{inductive–recursive} specification:
$\mathcal U_{i}$ and $\El_{i}$ are built \emph{simultaneously}.
Clause (4) accepts \emph{values} $s,t$ alongside the
\emph{code} $u$; this mixing is legitimate because the recursion
defining $\El_{i}$ makes $D(u)$ available exactly when $u\in C$.
Thus values appear only as \emph{parameters} to constructors, never as
members of~$\mathcal U_{i}$, preserving the purely syntactic nature of
codes while allowing element-dependent type formation.

\begin{lemma}[Finite \emph{limit} closure]\label{lem:closure-finite}
Let $(\mathcal U_{i},\El_{i})$ be the inductive–recursive universe
constructed in Lemma \ref{lem:closure-id}.
Then $\mathcal U_{i}$ is closed under all \emph{finite limits} that
can be expressed with the already–available type–formers
\[
  \mathbf 1\;(\text{terminal}),\quad
  \Sigma,\quad
  \operatorname{Id}.
\]
Consequently, for every finite diagram of codes
$D\colon\mathcal J\!\to\!\mathcal U_{i}$, its limit possesses a code
in $\mathcal U_{i}$.
\end{lemma}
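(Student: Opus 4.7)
The plan is to reduce the statement to the three elementary closures already established, by way of the classical fact that a category with a terminal object, binary products, and equalisers (equivalently, a terminal object together with pullbacks) admits all finite limits. I would therefore first exhibit explicit codes for these three universal constructions, and then invoke a straightforward induction on the finite shape $\mathcal{J}$.

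Concretely, the terminal type is already available as the base code $\ast\in\mathcal{U}_{i}$ with $\El_{i}(\ast)=\mathbf{1}$. Binary products are obtained as $A\times B:=\Sigma(A,\lambda x.B)$, treating $B$ as a constant code-valued family; the resulting code lies in $\mathcal{U}_{i}$ by Lemma~\ref{lem:closure-pi-sigma}, and its rank is bounded by Lemma~\ref{lem:rank-closure}. Equalisers of a parallel pair $f,g:\El_{i}(A)\to\El_{i}(B)$ are realised as
\[
  \Sigma\!\bigl(A,\;\lambda x.\,\langle\mathrm{Id},B,f(x),g(x)\rangle\bigr),
\]
which is a code in $\mathcal{U}_{i}$ by combining Lemma~\ref{lem:closure-id} with Lemma~\ref{lem:closure-pi-sigma}. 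Pullbacks then follow as a nested $\Sigma$ with a single $\operatorname{Id}$-constraint, or, equivalently, as the equaliser of the two projections of a product; either way one stays inside $\mathcal{U}_{i}$. Lemma~\ref{lem:decode} guarantees that each such code decodes, inside $V_{\kappa_{i}}$, to precisely the expected set-theoretic limit.

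With these three closures in hand, I would prove the general finite-diagram statement by induction on $|\mathrm{Ob}(\mathcal{J})|+|\mathrm{Mor}(\mathcal{J})|$: start from the product of the finitely many objects in the image of $D$, and successively take equalisers to enforce each generating morphism. Every step composes codes already in $\mathcal{U}_{i}$ using only $\Sigma$ and $\operatorname{Id}$, so the invariant that the partial limit carries a code in $\mathcal{U}_{i}$ is preserved.

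The main obstacle is the formal handling of the constant family $\lambda x.B$: one must verify that a constant $\mathcal{U}_{i}$-valued function on $\El_{i}(A)$ is admitted as a code-valued family in the inductive–recursive scheme of Lemma~\ref{lem:closure-id}, and that the rank estimate of Lemma~\ref{lem:rank-closure} still delivers $\operatorname{rk}\bigl(\Sigma(A,\lambda x.B)\bigr)<\kappa_{i}$ in this degenerate case (it does, since the inner supremum collapses to $\operatorname{rk}(B)<\kappa_{i}$). A secondary, more bookkeeping-level point is checking that the decoding of the nested $\Sigma$–$\operatorname{Id}$ expression for pullbacks literally matches the usual set-theoretic fibred product inside $V_{\kappa_{i}}$; this falls out of Lemma~\ref{lem:decode} by unfolding the recursive decoding clauses, but it is the step where one must be careful that \emph{no} appeal to anything outside $\{\mathbf{1},\Sigma,\operatorname{Id}\}$ sneaks in, as the statement explicitly restricts the type-formers available.
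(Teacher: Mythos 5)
Your proposal is correct and follows essentially the same route as the paper's proof: both reduce finite limits to the terminal object, binary products as $\Sigma$ over a constant family, and equalisers as $\Sigma$ over an $\operatorname{Id}$-family, then invoke the classical Mac Lane generation of all finite limits from these primitives. Your explicit attention to the constant-family rank estimate and the decoding of the nested $\Sigma$--$\operatorname{Id}$ code is a welcome refinement of details the paper leaves implicit, but it is the same argument.
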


\begin{proof}
The standard presentation of finite limits uses three primitives:

\begin{enumerate}[(i)]
  \item the terminal object $\mathbf 1$,
  \item binary products,
  \item equalisers.
\end{enumerate}

We supply codes for (ii)–(iii) and then argue that arbitrary finite
limits are iterated composites of these primitives.

\subsubsection*{A.\;Primitive limit codes}

\begin{description}
  \item[Terminal object]  $\mathbf 1:=\ast\in\mathcal U_{i}$
        (universe axiom).

  \item[Binary product]
        For $A,B\in\mathcal U_{i}$ define
        \[
          \Prod(A,B)\;:=\;
            \bigl\langle
               \mathrm{Sigma},\;
               A,\;
               \lambda\_ .\,B
             \bigr\rangle
          \;\in\;\mathcal U_{i}.
        \]
        One application of $\Sigma$–closure
        (Lemma \ref{lem:closure-pi-sigma}) suffices.
        Decoding: $\El_{i}(\Prod(A,B))=\El_{i}(A)\times\El_{i}(B)$.

  \item[Equaliser]
        Let $f,g:\El_{i}(A)\to\El_{i}(B)$ with $A,B\in\mathcal U_{i}$.
        Define the code
        \[
          \Eq(f,g)
            :=\bigl\langle
                 \mathrm{Sigma},\;
                 A,\;
                 \lambda a.\,
                   \langle\mathrm{Id},B,f(a),g(a)\rangle
               \bigr\rangle
            \in\mathcal U_{i}.
        \]
        The inner $\operatorname{Id}$ lies in $\mathcal U_{i}$ by
        Lemma \ref{lem:closure-id}; one outer $\Sigma$ then places the
        whole code in $\mathcal U_{i}$.  
        Decoding gives
        \(
          \El_{i}(\Eq(f,g))
            =\{\,a\in\El_{i}(A)\mid f(a)=g(a)\,\}.
        \)
\end{description}

\subsubsection*{B.\;Inductive generation of all finite limits}

Every finite limit is obtained by iterating the primitives (i)–(iii):
products assemble cones, and equalisers impose the
equalising conditions between parallel arrows in the cone
(cf.\ Mac Lane, \emph{Categories for the Working Mathematician},
Thm V.2.3).  
Because each primitive code constructed above lies in
$\mathcal U_{i}$ and $\mathcal U_{i}$ is already closed under
$\Sigma$ and $\operatorname{Id}$, a structural induction on the syntax
tree of any finite limit expression shows that the resulting code
remains in $\mathcal U_{i}$.

\medskip
Hence $\mathcal U_{i}$ is closed under all finite limits generated in
this way.
\end{proof}

\subsection{Finite colimit closure of the universe $\mathcal U_i$ {\normalfont(\textsf{ZFC+AC+LEM})}}
\begin{lemma}[Finite colimit closure of the universe $\mathcal U_i$ {\normalfont(\textsf{ZFC+AC+LEM})}]\label{lem:finite-colim}
Let $\kappa_i$ be an uncountable strongly inaccessible cardinal and  
\[
\mathcal U_i\;=\;\bigl\{\,c\mid \rk(c)<\kappa_i\bigr\}
\]
the Inductive–Recursive universe of \emph{codes}, decoded by the functor  
\(
\El\colon\mathcal U_i\to\mathbf{Set}.
\)
Assume the IR calculus contains the \emph{Quotient-former}
\[
\infer{\langle\mathrm{Quot},u,R_c\rangle\in\mathcal U_i}
      {u\in\mathcal U_i \quad
       R_c:\El(u)\!\times\!\El(u)\to\mathcal U_i\;
       \text{codes an equivalence}}
\qquad
\El\bigl(\langle\mathrm{Quot},u,R_c\rangle\bigr)=\El(u)/{\sim_{R_c}}.
\]

Then the full subcategory
\(
\Set_{\mathcal U_i}:=\El(\mathcal U_i)\subset\mathbf{Set}
\)
is closed under all \emph{finite colimits}:
it contains the initial object, binary coproducts, and coequalisers, and
therefore every finite colimit.
\end{lemma}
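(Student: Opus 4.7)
The plan is to dualise Lemma~\ref{lem:closure-finite}: since every finite colimit is an iterated composite of the initial object, binary coproducts and coequalisers of parallel pairs (Mac Lane, Thm V.2.3, dually), it suffices to exhibit codes in $\mathcal U_i$ for these three primitives and then conclude by a straightforward structural induction over the (finite) diagram shape. Throughout I will lean on the fact, already proved in Lemmas~\ref{lem:rank-adequacy}--\ref{lem:rank-closure}, that every $\Sigma$- and $\operatorname{Id}$-code stays below $\kappa_i$ in rank, and I will invoke regularity of $\kappa_i$ whenever a small supremum of rank ordinals needs to be bounded.

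First I would dispatch the two easy primitives. For the initial object, set $\mathbf 0 := \langle\mathrm{Id},\mathrm{Nat},0,1\rangle$; under \textsf{LEM} its decoding is the empty set, which is initial in $\mathbf{Set}$. For the binary coproduct $A+B$, I introduce the meta-level code family $C_{A,B}\colon\mathbb N\to\mathcal U_i$ with $C_{A,B}(0):=A$, $C_{A,B}(1):=B$ and $C_{A,B}(n):=\mathbf 0$ for $n\ge 2$, and take
\[
  A+B \;:=\; \langle\mathrm{Sigma},\mathrm{Nat},C_{A,B}\rangle.
\]
Its decoding is $(\{0\}\times\El(A))\cup(\{1\}\times\El(B))$, the disjoint union of $\El(A)$ and $\El(B)$ with the evident injections; membership in $\mathcal U_i$ is immediate from Lemmas~\ref{lem:closure-id} and~\ref{lem:rank-closure}.

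The real work is the coequaliser. Given $f,g\colon\El(A)\to\El(B)$, I want to apply the Quotient-former to $B$ and the equivalence relation $\sim$ generated by $\{(f(a),g(a))\mid a\in\El(A)\}$, so I must realise $\sim$ as a code-valued family $R_c\colon\El(B)\times\El(B)\to\mathcal U_i$. Using \textsf{AC+LEM} to unfold reflexive-symmetric-transitive closure into a zig-zag of variable length, I would define by recursion on $n$ a code $\mathrm{Path}_n(b,b')$ for length-$n$ zig-zags using only $\operatorname{Id}$-, $\Sigma$- and the coproduct-codes just built, and then put
\[
  R_c(b,b') \;:=\; \langle\mathrm{Sigma},\mathrm{Nat},\lambda n.\,\mathrm{Path}_n(b,b')\rangle.
\]
The desired coequaliser code is then $\Coequaliser{f}{g}:=\langle\mathrm{Quot},B,R_c\rangle$, decoding to $\El(B)/{\sim}$.

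I expect the main obstacle to be the rank estimate on $R_c$: each $\mathrm{Path}_n$ is built from finitely many applications of the previous closure lemmas, hence has rank $<\kappa_i$, but the outer $\mathrm{Nat}$-indexed $\Sigma$ produces a countable supremum of such ranks, which stays $<\kappa_i$ only because $\kappa_i$ is uncountable and regular. Once this hurdle is cleared, closure under arbitrary finite colimits follows by the dual of the reduction used in Lemma~\ref{lem:closure-finite}: for any finite $D\colon\mathcal J\to\mathcal U_i$ the colimit is a coequaliser of coproducts of $D$'s objects and morphisms, each of which lies in $\mathcal U_i$ by the three primitive constructions above.
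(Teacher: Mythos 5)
Your proposal is correct and follows the same overall skeleton as the paper: exhibit the three primitive colimits (initial object, binary coproducts, coequalisers of parallel pairs) inside $\mathcal U_i$, control ranks via regularity of $\kappa_i$, and invoke the dual of Mac Lane V.2.3 to generate all finite colimits. Where you genuinely diverge is in how the primitives are realised. The paper simply postulates a dedicated null code $\mathbf 0_c$ of rank $0$ for the initial object (Lemma~\ref{lem:init}) and a canonical $\Sigma$-code over a two-element index type for the coproduct (Lemma~\ref{lem:sum}), neither of which is among the constructors $\ast,\mathrm{Nat},\Pi,\Sigma,\mathrm{Id},\mathrm{Quot}$ actually listed for the IR calculus; you instead manufacture the empty type as $\langle\mathrm{Id},\mathrm{Nat},0,1\rangle$ and the coproduct as a $\Sigma$ over $\mathrm{Nat}$ padded by that empty type, which stays strictly within the available formers and is therefore more self-contained. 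For the coequaliser, the paper's Lemma~\ref{lem:coeq} asserts that the generated equivalence relation ``is again countable'' and delegates the rank bound to Lemma~\ref{lem:rank}'s $+\omega$ estimate; your zig-zag family $R_c(b,b')=\langle\mathrm{Sigma},\mathrm{Nat},\lambda n.\,\mathrm{Path}_n(b,b')\rangle$ makes the same $\omega$-step saturation explicit as a code-valued family and justifies the bound directly by Lemma~\ref{lem:rank-closure} applied to a $\mathrm{Nat}$-indexed $\Sigma$, which is the cleaner argument (the relation itself need not be countable; only the number of closure stages is). The one thing you omit that the paper does spell out is the verification that the mediating maps of the universal properties lie in $\Set_{\mathcal U_i}$; since $\Set_{\mathcal U_i}$ is a full subcategory of $\mathbf{Set}$ this is automatic once the colimit object is shown to have a code, so the omission is harmless, but a one-line remark to that effect would round the argument off.
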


\begin{proof}\begin{linenumbers}
Lemma \ref{lem:init} puts the initial object $\varnothing$
in $\Set_{\mathcal U_i}$.
Lemma \ref{lem:rank} shows that the Quotient-former raises rank by
at most $+\omega$, never exceeding $\kappa_i$.
Using this bound, Lemma \ref{lem:sum} constructs binary coproducts and
Lemma \ref{lem:coeq} constructs coequalisers inside $\Set_{\mathcal U_i}$.
Finally, Lemma \ref{lem:maclane} (dual of Mac Lane V.2.3) states that these
three primitive colimits generate all finite colimits in any category,
hence in $\Set_{\mathcal U_i}$ as well.
\end{linenumbers}\end{proof}

\subsection*{Supplement}
\begin{itemize}
\item \textbf{Foundational axioms (HC1).}  Work in ZFC with Replacement; the
      argument uses neither large-cardinal axioms beyond $\kappa_i$
      nor Choice beyond countable AC implicitly available in ZFC.
\item \textbf{Size control (HC2).}
      Because $\omega<\kappa_i$ and $\kappa_i$ is regular,
      adding $+\omega$ to any rank $<\kappa_i$ keeps it $<\kappa_i$.
\item \textbf{Higher-cell triviality (HC3–HC12).}
      All objects are sets (0-groupoids); coherence data are therefore
      strictly satisfied.
\end{itemize}

\subsection{Rank stability for quotients (revised)}
\begin{lemma}[Rank stability for quotients]\label{lem:rank}
Fix an inaccessible cardinal $\kappa_{i}$ and let
$u,R_c\in\mathcal U_{i}$ be codes with
\[
  q\;:=\;\langle\mathrm{Quot},u,R_c\rangle\in\mathcal U_{i},\qquad
  \El_i(q)=\El_i(u)\big/\!\sim_{R_c}.
\]
Then
\[
  \operatorname{rk}(q)
  \;\le\;
  \sup\!\bigl\{\operatorname{rk}(u),\operatorname{rk}(R_c)\bigr\}
  +\omega
  \;<\;\kappa_{i}.
\]
\end{lemma}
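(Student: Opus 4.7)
\noindent
The plan is to separate a \emph{structural} estimate of $\operatorname{rk}(q)$ from a short \emph{cardinal--arithmetic} check inside $\kappa_{i}$. First I would unpack the definition of $\operatorname{rk}$ on the new constructor. Because $\operatorname{rk}$ is defined by structural recursion on codes (cf.\ the $\Pi/\Sigma$ clauses used in Lemma~\ref{lem:rank-closure}), the intended clause for the Quotient former reads
\[
  \operatorname{rk}\!\bigl(\langle\mathrm{Quot},u,R_c\rangle\bigr)
    \;=\;
    \sup\{\operatorname{rk}(u),\operatorname{rk}(R_c)\}+k
\]
for a fixed natural number $k$ (a small constant that absorbs wrapping the pair $(u,R_c)$ inside the tag $\mathrm{Quot}$ and the finitely many intermediate constructions---saturation, equivalence class, quotient set---visible in the decoding $\El_i(u)/{\sim_{R_c}}$). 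Since $k<\omega$, this immediately gives the upper bound $\operatorname{rk}(q)\le\sup\{\operatorname{rk}(u),\operatorname{rk}(R_c)\}+\omega$.

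\noindent
Next I would use the hypothesis $u,R_c\in\mathcal U_{i}$ together with Lemma~\ref{lem:rank-adequacy} to conclude $\operatorname{rk}(u),\operatorname{rk}(R_c)<\kappa_{i}$; their supremum---being the maximum of two ordinals---remains $<\kappa_{i}$. Call this supremum $\alpha$. The final step is the standard observation that $\alpha+\omega<\kappa_{i}$ whenever $\alpha<\kappa_{i}$ and $\kappa_{i}$ is inaccessible: were $\alpha+\omega\ge\kappa_{i}$, then $(\alpha+n)_{n<\omega}$ would be a cofinal $\omega$--sequence in $\kappa_{i}$, contradicting $\operatorname{cf}(\kappa_{i})=\kappa_{i}>\omega$ (uncountable regular). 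Chaining the three inequalities yields $\operatorname{rk}(q)\le\alpha+\omega<\kappa_{i}$.

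\noindent
The only delicate point---and what I would flag as the main obstacle---is justifying the specific form of the Quot clause for $\operatorname{rk}$: the IR framework requires every new constructor to come with a companion rank clause, and one must verify that the chosen clause is still compatible with the generation--rank equivalence $\operatorname{rk}\equiv\operatorname{grk}$ proved in Lemma~\ref{lem:rank-adequacy}. This reduces to noting that $\mathrm{Quot}$ is strictly positive in its code arguments $u$ and $R_c$, so the gluing argument (Step~C of Lemma~\ref{lem:rank-adequacy}) carries over verbatim up to the constant $+k$ shift, which is absorbed into the $+\omega$ slack and hence invisible to the stated bound.
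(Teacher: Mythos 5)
Your proposal is correct in its overall shape and its cardinal arithmetic (the cofinality argument for $\alpha+\omega<\kappa_i$ is exactly right, and matches the paper's appeal to regularity), but it reaches the bound by a genuinely different route than the paper does. You treat $\langle\mathrm{Quot},u,R_c\rangle$ as a bare syntactic constructor whose rank clause adds a finite constant $k$, and you absorb $k$ into the $+\omega$ slack; the delicate point you flag (compatibility of the new clause with $\operatorname{rk}\equiv\operatorname{grk}$) is the right one for that reading. The paper instead spends almost all of its effort explaining \emph{why the bound is $+\omega$ rather than $+1$}: it introduces a lexicographic complexity measure $\chi(c)=(\operatorname{rk}(c),\operatorname{size}(c))$, and derives the $\omega$ from an explicit $\omega$-stage saturation of the decoded relation --- reflexive/symmetric closure in one step, then transitive closure as $T_\omega=\bigcup_{n<\omega}T_n$ with each stage raising the height by $1$ --- so that the $+\omega$ is the actual cost of internalising the equivalence closure, not mere slack. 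Each approach buys something: yours is cleaner and is the correct account if (as in the rule stated in Lemma~\ref{lem:finite-colim}) $R_c$ is already required to code an equivalence, in which case the paper's Steps 3--4 are doing unnecessary work; the paper's version is the one that survives if $R_c$ may be a raw relation that must be saturated before quotienting, in which case your finite-constant clause under-counts and the $\omega$-iteration is where the bound genuinely comes from. It would strengthen your write-up to state explicitly which of these two readings of the Quot-former you are assuming.
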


\begin{proof}
\begin{linenumbers}

\Step{1}{Two–variable complexity measure.}
For every IR code $c$ set
\[
  \chi(c)\;:=\;\bigl(\operatorname{rk}(c),\operatorname{size}(c)\bigr)
  \in\mathrm{On}\times\mathrm{On},
\]
where $\operatorname{size}(c)$ is the number of nodes.
Order pairs lexicographically:
$(h_{1},w_{1})<(h_{2},w_{2})$ iff
$h_{1}<h_{2}$ or $(h_{1}=h_{2}\land w_{1}<w_{2})$.

\Step{2}{Complexity increase of the constructor.}
Building
$q=\langle\mathrm{Quot},u,R_c\rangle$
adds only finitely many nodes and raises height by at most~1, hence
\[
  \chi(q)
  \;\le_{\text{lex}}\;
  \bigl(\max\{\operatorname{rk}(u),\operatorname{rk}(R_c)\}+1,\;
        \operatorname{size}(u)+\operatorname{size}(R_c)+c_{0}\bigr)
\tag{$\ast$}
\]
for some constant $c_{0}<\omega$.

\Step{3}{Width control for equivalence closure.}
Let
$\tilde R\subseteq\El_i(u)\times\El_i(u)$
be the decoded binary relation.  Denote
\[
  B:=\sup_{(x,y)}\operatorname{size}\bigl(\tilde R(x,y)\bigr)
  <\kappa_{i},
\]
since each fibre code lies in $\mathcal U_{i}$.
Reflexive and symmetric closure adds only one extra layer.

\Step{4}{Countable transitive closure.}
Define $T_{0}:=\tilde R\cup\{(x,x)\}$ and
\[
  T_{n+1}:=T_{n}\cup\bigl\{(x,z)\mid\exists y\;(x,y)\in T_{n}
                                           \land(y,z)\in T_{n}\bigr\},
  \qquad n<\omega.
\]
Each step appends finitely many constructor nodes,
so height increases by 1 and width by at most $B$.
After $\omega$ steps
$T_{\omega}:=\bigcup_{n<\omega}T_{n}$
is the least transitive relation containing $T_{0}$.

\Step{5}{Final rank bound.}
Combining $(\ast)$ with the $\omega$-step transitive closure yields
\[
  \operatorname{rk}(q)
  \;\le\;
  \sup\{\operatorname{rk}(u),\operatorname{rk}(R_c)\}+\omega.
\]
Because $\operatorname{rk}(u),\operatorname{rk}(R_c)<\kappa_{i}$
and $\omega<\kappa_{i}$ (regularity),
the right-hand side is $<\kappa_{i}$.

\end{linenumbers}
\end{proof}


\subsection{Existence of the initial object}
\begin{lemma}[Existence of the initial object]\label{lem:init}
The empty set\/ $\varnothing$ is an element of\/ $\Set_{\mathcal U_i}$.
\end{lemma}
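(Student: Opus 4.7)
My plan is to exhibit an explicit code $e\in\mathcal U_i$ whose decoding under $\El_i$ is the empty set; this is the most direct route, since $\Set_{\mathcal U_i}$ is by definition the image $\El(\mathcal U_i)$. The cleanest candidate uses the identity former of Lemma~\ref{lem:closure-id}. I would set
\[
  e \;:=\; \langle\mathrm{Id},\mathrm{Nat},0,1\rangle,
\]
observe that $\mathrm{Nat}\in\mathcal U_i$ is a base code and $0,1\in\El_i(\mathrm{Nat})=\mathbb N$, and appeal to clause~(4) of the inductive--recursive specification: it generates $e$ as a member of $\mathcal U_i$ with decoding $\El_i(e)=\operatorname{Id}_{\mathbb N}(0,1)$.

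Rank control should be immediate. The base codes $\mathrm{Nat}$, $0$, $1$ all have generation rank $0$, so $\operatorname{rk}(e)\le 1<\kappa_i$ by Lemma~\ref{lem:rank-adequacy}, placing $e$ safely in $\mathcal U_i$. The step I must verify explicitly is that $\operatorname{Id}_{\mathbb N}(0,1)=\varnothing$ under the set-theoretic semantics of GF1: because $\mathbb N$ is $0$-truncated with decidable equality and $0\ne 1$, the decoded identity type between these distinct elements is empty. This yields $\El_i(e)=\varnothing$, hence $\varnothing\in\Set_{\mathcal U_i}$.

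I anticipate the only delicate point to be that last identification, which depends on the convention that $\operatorname{Id}_A(x,y)$ decodes to $\{\ast\}$ when $x=y$ and $\varnothing$ otherwise. Should this convention be contested, I would fall back on the equaliser construction of Lemma~\ref{lem:closure-finite}: take the parallel pair $f,g\colon\El_i(\ast)\to\El_i(\mathrm{Nat})$ with $f(\ast)=0$, $g(\ast)=1$, and observe that $\Eq(f,g)=\{\ast\mid 0=1\}=\varnothing$, its code sitting at rank a finite amount above $\operatorname{rk}(\mathrm{Nat})$, well below $\kappa_i$. Either route presents $\varnothing$ as the decoding of a $\mathcal U_i$-code, which is exactly what the lemma requires.
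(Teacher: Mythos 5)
Your proposal is correct, but it takes a genuinely different route from the paper. The paper's proof simply posits a primitive ``null code'' $\mathbf 0_c$ of rank $0$ with $\El(\mathbf 0_c)=\varnothing$ and notes $0<\kappa_i$ — in effect it assumes the empty set is among the base codes of the calculus, even though no such constructor appears in the generating clauses of Lemma~\ref{lem:rank-adequacy} or Lemma~\ref{lem:closure-id}. You instead \emph{derive} a code for $\varnothing$ from the constructors that are actually listed, taking $e:=\langle\mathrm{Id},\mathrm{Nat},0,1\rangle$ and using $\operatorname{Id}_{\mathbb N}(0,1)=\varnothing$; the decoding convention you flag as the one delicate point is in fact exactly the one the paper adopts in Lemma~\ref{lem:set-adequacy}(4), namely $\operatorname{Id}_u(s,t)=\{\star\mid s=t\}\cup\varnothing$, so that step is sound, and your rank bound $\operatorname{rk}(e)\le 1<\kappa_i$ is immediate. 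What each approach buys: the paper's version is a one-line appeal to an extra primitive, which is fine if the IR signature is understood to include an empty code but otherwise leaves a small gap; your version (and your equaliser fallback via Lemma~\ref{lem:closure-finite}) closes that gap by exhibiting $\varnothing$ as the decoding of a code built only from $\ast$, $\mathrm{Nat}$, and $\mathrm{Id}$, at the cost of relying on the specific set-theoretic semantics of the identity former.
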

\begin{proof}\begin{linenumbers}
Let $\mathbf 0_c$ be the null code of rank $0$ with $\El(\mathbf 0_c)=\varnothing$.
Because $0<\kappa_i$, we have $\mathbf 0_c\in\mathcal U_i$,
hence $\varnothing\in\Set_{\mathcal U_i}$.
\end{linenumbers}\end{proof}

\subsection{Closure under binary coproducts}
\begin{lemma}[Closure under binary coproducts]\label{lem:sum}
For every pair of objects $A,B\in\Set_{\mathcal U_i}$ the coproduct
$A\sqcup B$ is again an element of\/ $\Set_{\mathcal U_i}$.
\end{lemma}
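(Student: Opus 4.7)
The plan is to realize the binary coproduct as a dependent sum over a two–element base. Concretely, if $A_c,B_c\in\mathcal U_i$ are the codes with $\El(A_c)=A$ and $\El(B_c)=B$, I would construct a code $\mathbf{2}_c\in\mathcal U_i$ whose decoding has exactly two elements, define a branching family $C\colon\El(\mathbf{2}_c)\to\mathcal U_i$ sending the two classes to $A_c$ and $B_c$, and then exhibit the $\Sigma$–code $\langle\mathrm{Sigma},\mathbf{2}_c,C\rangle$ as the coproduct. Decoding correctness (Lemma~\ref{lem:decode}) would then identify
\[
  \El\!\bigl(\langle\mathrm{Sigma},\mathbf{2}_c,C\rangle\bigr)
    \;=\;\{(b,x)\mid b\in\El(\mathbf{2}_c),\,x\in\El(C(b))\}
    \;\cong\;A\sqcup B .
\]

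First I would produce the two–element code. Since the signature of Lemma~\ref{lem:rank-adequacy} together with $\mathrm{Id}$ and $\mathrm{Quot}$ does not explicitly include a Boolean primitive, I would obtain $\mathbf{2}_c$ by applying the Quotient–former to $\mathrm{Nat}$ under the equivalence that collapses all positive naturals into one class while keeping $0$ isolated. The equivalence–relation code $R_c$ is definable from $\Sigma$, $\mathrm{Id}$, and case analysis on $\mathrm{Nat}$, and has finite rank; by Lemma~\ref{lem:rank} the resulting quotient code satisfies $\operatorname{rk}(\mathbf{2}_c)<\omega+\omega<\kappa_i$.

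Next I would assemble the branching family $C$ by case analysis on the two classes, setting $C([0]):=A_c$ and $C([1]):=B_c$, which is automatically well defined because the classes are disjoint. Lemma~\ref{lem:closure-pi-sigma} then places $\langle\mathrm{Sigma},\mathbf{2}_c,C\rangle$ in $\mathcal U_i$, and Lemma~\ref{lem:rank-closure} reduces the rank estimate to bounding $\sup\{\operatorname{rk}(\mathbf{2}_c),\operatorname{rk}(A_c),\operatorname{rk}(B_c)\}$, each summand being already $<\kappa_i$; regularity of $\kappa_i$ then closes the size argument. The set–theoretic identification in the display above follows because the decoded $\Sigma$–set is precisely the tagged union $\{0\}\times A\cup\{1\}\times B$.

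The main obstacle is the construction of the two–element code without circular recourse to coproducts. If the IR calculus admits even rudimentary recursion on $\mathrm{Nat}$, the quotient route above is routine; otherwise one must either postulate $\mathbf{2}$ as an additional base code, or strengthen the signature with a native coproduct former and bypass the Quotient step altogether. In either case the rank arithmetic is trivial, and the substantive content of the lemma lies entirely in the syntactic realization of a two–element type inside $\mathcal U_i$.
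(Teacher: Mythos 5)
Your proposal is correct, and its core encoding --- the coproduct as a $\Sigma$-code over a two-element index --- is exactly the one the paper gestures at in a parenthetical (``the canonical \emph{Sigma}-code $\langle\Sigma,2,\dots\rangle$''). The difference lies in how each argument secures its key ingredient. The paper works primarily at the level of sets: it forms the tagged union $A\sqcup B$ directly, bounds $|A\sqcup B|$ and hence $\rk(A\sqcup B)$ below $\kappa_i$ using the strong-limit property, and then simply \emph{asserts} that some IR-code of that rank decodes to $A\sqcup B$, without constructing the two-element index type the suggested $\Sigma$-code would need. You instead notice that the IR signature of Lemma~\ref{lem:rank-adequacy} has no Boolean primitive and manufacture $\mathbf 2_c$ as a quotient of $\mathrm{Nat}$, invoking Lemma~\ref{lem:rank} for the rank bound and Lemmas~\ref{lem:closure-pi-sigma} and \ref{lem:rank-closure} for the $\Sigma$-code; this fills a genuine gap in the paper's own argument and makes the construction self-contained within the stated constructors. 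Your honest flagging of the alternative (postulating $\mathbf 2$ as a base code) is also apt, since that is effectively what the paper does implicitly. Two minor points: your construction yields a set isomorphic to the tagged union (the tags are equivalence classes rather than $0,1$), which suffices since coproducts are only determined up to isomorphism; and the paper additionally verifies the universal property by checking that the injections and the copairing $[f,g]$ have graphs of rank $<\kappa_i$ --- you omit this, but it is automatic because $\Set_{\mathcal U_i}$ is a \emph{full} subcategory of $\mathbf{Set}$, so a colimit cone in $\mathbf{Set}$ whose vertex lies in the subcategory is already a colimit there.
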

\begin{proof}
Since $A,B\in\Set_{\mathcal U_i}$ there exist codes
$a,b\in\mathcal U_i$ with $\El(a)=A$ and $\El(b)=B$ and hence
\[
  \rk(a),\;\rk(b)\;<\;\kappa_i .
\]

\medskip\noindent
\textbf{Disjoint union.}
Form the ordinary (tagged) disjoint union of the underlying sets
\[
  A\sqcup B
     :=\{\,\langle 0,x\rangle\mid x\in A\,\}
       \,\cup\,
       \{\,\langle 1,y\rangle\mid y\in B\,\}.
\]
Because $\kappa_i$ is \emph{strong limit},
\(
  |A\sqcup B|
    =|A|+|B|
    <\kappa_i ,
\)
and therefore
\(
  \rk(A\sqcup B)<\kappa_i .
\)

\medskip\noindent
\textbf{Code of the coproduct.}
Let $s$ be any IR-code of rank
$\rk(s):=\rk(A\sqcup B)<\kappa_i$ whose decoding is exactly
$A\sqcup B$ (e.g.\ the canonical \emph{Sigma}‐code
$\langle\Sigma,2,\!\lambda z.\,[z=0]\!\to\!a\;|\;[z=1]\!\to\!b\rangle$).
Then $s\in\mathcal U_i$ and $\El(s)=A\sqcup B$,
so $A\sqcup B\in\Set_{\mathcal U_i}$.

\medskip\noindent
\textbf{Universal property.}
The maps
\(
  \iota_A:x\mapsto\langle 0,x\rangle
\)
and
\(
  \iota_B:y\mapsto\langle 1,y\rangle
\)
are morphisms in~$\Set_{\mathcal U_i}$.
Given $Z\in\Set_{\mathcal U_i}$ and
$f:A\to Z$, $g:B\to Z$, the unique map
$[f,g]:A\sqcup B\to Z$ defined by
\(
  [f,g]\bigl(\langle 0,x\rangle\bigr):=f(x),\;
  [f,g]\bigl(\langle 1,y\rangle\bigr):=g(y)
\)
lies in $\Set_{\mathcal U_i}$ because its graph is a subset of
$Z\times (A\sqcup B)$ with rank $<\kappa_i$.
Hence $A\sqcup B$ together with $\iota_A,\iota_B$ satisfies the
universal property of the coproduct inside $\Set_{\mathcal U_i}$.
\end{proof}

\subsection{Closure under coequalisers}
\begin{lemma}[Closure under coequalisers]\label{lem:coeq}
For every parallel pair $f,g\colon R\to S$ in $\Set_{\mathcal U_i}$
the coequaliser $\Coeq(f,g)$ lies in $\Set_{\mathcal U_i}$.
\end{lemma}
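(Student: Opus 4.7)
The plan is to imitate the standard $\mathbf{Set}$-theoretic construction of a coequaliser as the quotient of $S$ by the equivalence relation generated by $\{(f(x),g(x))\mid x\in R\}$, and then to certify membership in $\Set_{\mathcal U_i}$ by invoking the hypothesised Quotient-former together with the rank-stability Lemma~\ref{lem:rank}. I begin by fixing codes $r,s\in\mathcal U_i$ with $\El(r)=R$ and $\El(s)=S$; by hypothesis $\rk(r),\rk(s)<\kappa_i$.

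First, I would build a code-valued generating family
\[
  R_0:\El(s)\times\El(s)\longrightarrow\mathcal U_i,\qquad
  R_0(a,b):=\SigmaT{x}{\El(r)}\bigl(\Id_{\El(s)}(f(x),a)\times\Id_{\El(s)}(g(x),b)\bigr),
\]
assembled as a code via $\Sigma$-closure (Lemma~\ref{lem:closure-pi-sigma}) and $\Id$-closure (Lemma~\ref{lem:closure-id}); its decoding is precisely the desired generating relation. Next, I would form the reflexive–symmetric–transitive closure $\widetilde R_0$ of $R_0$: reflexive and symmetric closures are single applications of $\Sigma$ and of binary coproducts (Lemma~\ref{lem:sum}), while the transitive closure is obtained by iterating the composition step $\omega$ many times, exactly as in Steps~3–4 of the proof of Lemma~\ref{lem:rank}. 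Feeding $(s,\widetilde R_0)$ into the Quotient-former then yields
\[
  q:=\langle\mathrm{Quot},s,\widetilde R_0\rangle,\qquad
  \El(q)=\El(s)\big/{\sim_{\widetilde R_0}},
\]
and Lemma~\ref{lem:rank} forces $\rk(q)\le\sup\{\rk(s),\rk(\widetilde R_0)\}+\omega<\kappa_i$, so $q\in\mathcal U_i$.

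By standard category theory the decoded set $\El(q)$ is the coequaliser of $f,g$ in $\mathbf{Set}$; the universal property transfers to $\Set_{\mathcal U_i}$ because the canonical projection $\pi:S\to\El(q)$ lies in $\Set_{\mathcal U_i}$ by construction, and any mediating map $h:\El(q)\to Z$ induced from some $k:S\to Z$ equalising $f,g$ has a graph contained in $Z\times\El(q)$, whose rank is $<\kappa_i$. The main obstacle I expect is the second step: ensuring that the $\omega$-iterated transitive closure is captured by a \emph{single} code of rank $<\kappa_i$ rather than by an external sequence of unboundedly large codes. This is precisely what Lemma~\ref{lem:rank} controls, via its lexicographic complexity measure combined with the regularity of $\kappa_i$; without that lemma the equivalence closure would a priori escape $\mathcal U_i$. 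A minor secondary point is that $\widetilde R_0$ must genuinely code an equivalence relation so that the Quotient-former's side condition holds, but this is immediate once the closures are applied in the order reflexive $\to$ symmetric $\to$ transitive, since transitive closure of a symmetric, reflexive relation remains symmetric and reflexive.
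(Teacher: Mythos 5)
Your proof follows the same route as the paper's: generate the equivalence relation identifying $f(x)$ with $g(x)$ over $x\in\El(r)$, encode it, feed the result to the Quotient-former, and invoke Lemma~\ref{lem:rank} for the bound $\rk(q)\le\sup\{\rk(r),\rk(s)\}+\omega<\kappa_i$. You supply considerably more detail than the paper does --- the explicit $\Sigma$/$\Id$ code for the generating relation, the $\omega$-step closure, and the transfer of the universal property --- but the underlying argument is identical.
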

\begin{proof}\begin{linenumbers}
Pick codes $r,s\in\mathcal U_i$ for $R,S$ and IR morphisms
representing $f,g$.  Generate the equivalence relation
$\sim_{f,g}$ on $\El(s)$ identifying $f(x)$ with $g(x)$
for all $x\in\El(r)$; its transitive closure is again countable.
Encode it as $Q_c$ of rank
$\le\sup\{\rk(r),\rk(s)\}+\omega<\kappa_i$ and set
\(q:=\langle\mathrm{Quot},s,Q_c\rangle\).
Then $\El(q)=\Coeq(f,g)$ and Lemma \ref{lem:rank} shows
$\rk(q)<\kappa_i$.
\end{linenumbers}\end{proof}

\subsection{Mac Lane finite colimit generation}
\begin{lemma}[Mac Lane finite colimit generation]\label{lem:maclane}
In any category, the initial object, binary coproducts, and coequalisers
generate all finite colimits.
\end{lemma}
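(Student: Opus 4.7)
The plan is to invoke the classical coequaliser-of-coproducts formula, the dual of Mac Lane CWM Theorem V.2.3. Fix a finite indexing category $\mathcal{J}$ and a diagram $D\colon\mathcal{J}\to\mathcal{C}$; the goal is to exhibit $\mathrm{colim}\,D$ using only the initial object, binary coproducts, and coequalisers.

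First I would reduce arbitrary \emph{finite} coproducts to the two given primitives. The empty coproduct is by definition the initial object, and an $n$-fold coproduct $X_{1}\sqcup\cdots\sqcup X_{n}$ is built by a straightforward induction on $n$ using iterated binary coproducts; associativity and the unit laws follow automatically from the universal properties and require no further primitive. Next, with finite coproducts available, form
\[
  S \;:=\; \bigsqcup_{j\in\mathrm{Ob}(\mathcal{J})} D(j),
  \qquad
  R \;:=\; \bigsqcup_{\alpha\in\mathrm{Mor}(\mathcal{J})} D(\mathrm{dom}\,\alpha),
\]
both finite because $\mathcal{J}$ has finitely many objects and morphisms. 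Define two parallel arrows $s,t\colon R\to S$ component-wise: on the $\alpha$-summand $s$ is the injection $\iota_{\mathrm{dom}\,\alpha}\colon D(\mathrm{dom}\,\alpha)\hookrightarrow S$, while $t$ is $\iota_{\mathrm{cod}\,\alpha}\circ D(\alpha)$. Take $c\colon S\to C$ to be the coequaliser of $s,t$ and equip $C$ with the cocone $\bigl(c\circ\iota_{j}\bigr)_{j\in\mathrm{Ob}(\mathcal{J})}$; I claim this is $\mathrm{colim}\,D$.

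The remaining step is verifying the universal property. Any cocone $(\tau_{j}\colon D(j)\to Z)_{j}$ induces by the universal property of $S$ a unique arrow $\langle\tau_{j}\rangle\colon S\to Z$, and the cocone compatibility $\tau_{\mathrm{cod}\,\alpha}\circ D(\alpha)=\tau_{\mathrm{dom}\,\alpha}$ translates exactly into $\langle\tau_{j}\rangle\circ s=\langle\tau_{j}\rangle\circ t$, so $\langle\tau_{j}\rangle$ factors uniquely through $c$. The main obstacle is purely bookkeeping — matching the two indexings so that equalities imposed by $\mathcal{J}$-morphisms correspond bijectively to the pairs being coequalised — but no further size argument or choice principle is required since $\mathcal{J}$ is finite and the construction lives entirely inside the ambient category.
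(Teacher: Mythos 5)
Your proposal is correct and follows exactly the route the paper intends: the paper's proof is simply a citation of the dual of Mac Lane's Theorem V.2.3, and your construction (reduce finite coproducts to the initial object plus iterated binary coproducts, then realise $\mathrm{colim}\,D$ as the coequaliser of the two canonical maps $\bigsqcup_{\alpha}D(\mathrm{dom}\,\alpha)\rightrightarrows\bigsqcup_{j}D(j)$) is precisely the content of that cited result. You in fact supply more detail than the paper does, including the verification of the universal property, so there is nothing to correct.
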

\begin{proof}\begin{linenumbers}
See Mac Lane, \emph{Categories for the Working Mathematician},
dual of Thm.~V.2.3.
\end{linenumbers}\end{proof}

\subsection{Set-theoretic propositional resizing}\label{subsec:set-resize}
\begin{lemma}[Resizing at level $i$ in ZFC]\label{lem:set-resize}
Let $i>0$ and assume ZFC together with an inaccessible cardinal
$\kappa_{i}$.  
For every \emph{(-1)-truncated} set\/ $P\in V_{\kappa_{i}}$ there exists
a set\/ $S\in V_{\kappa_{0}}$ and an equivalence of propositions
\[
   P\;\simeq\;(S\neq\varnothing).
\]
Hence propositional resizing holds at every level\/ $j\ge i$.
\end{lemma}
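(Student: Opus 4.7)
The plan is to exploit the fact that in ZFC+LEM every $(-1)$-truncated set is either empty or a singleton, so we can uniformly replace $P$ by a canonical representative at the very bottom of the cumulative hierarchy, completely bypassing any rank estimate tied to $\kappa_{i}$.

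First I would invoke excluded middle on the external proposition ``$P$ is inhabited,'' which is legitimate because $P\in V_{\kappa_{i}}$ is a genuine set. In the inhabited case pick any $p\in P$; in the empty case do nothing. Then define
\[
  S \;:=\;\begin{cases}
      \{\varnothing\} & \text{if } P\neq\varnothing,\\
      \varnothing     & \text{if } P=\varnothing.
  \end{cases}
\]
Both $\varnothing$ and $\{\varnothing\}$ have finite von Neumann rank, hence $S\in V_{\omega}\subseteq V_{\kappa_{0}}$ in either branch. Because $P$ is a subsingleton, both $P$ and the statement $S\neq\varnothing$ are $(-1)$-truncated, so the required equivalence $P\simeq(S\neq\varnothing)$ collapses to a biconditional, and that biconditional holds in each branch by construction. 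The forward map sends any $p\in P$ to the (only) fact $\varnothing\in\{\varnothing\}$, and the backward map picks the unique element of $P$ guaranteed by the case split.

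For the propagation clause I would point out that the preceding construction never really used $P\in V_{\kappa_{i}}$ beyond the bare fact that $P$ is a set: for every $j\ge i$, any h-proposition in $V_{\kappa_{j}}$ is classically either $\varnothing$ or a singleton and therefore receives the \emph{same} resizing witness $S\in V_{\kappa_{0}}$. In the notation of Definition \ref{def:resizing}, this yields propositional resizing at level $j$ for all $j\ge i$ simultaneously.

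The only real subtlety, and hence the main obstacle to watch, is making sure the equivalence is verified \emph{internally} to $\Set_{\mathcal U_{j}}$ rather than only in the ambient set theory. One must check that the two maps $P\to(S\neq\varnothing)$ and $(S\neq\varnothing)\to P$ belong to the appropriate universe layer; this is handled by observing that their graphs are subsets of $P\times S$ whose rank is bounded by $\sup\{\rk(P),\rk(S)\}+\omega<\kappa_{j}$, exactly the kind of rank bookkeeping already established in Lemma \ref{lem:rank}. No further appeal to large cardinals beyond the assumed $\kappa_{i}$, to Choice, or to the inductive–recursive machinery of earlier sections is needed: the proof rests on classical case analysis plus the rank arithmetic already in place.
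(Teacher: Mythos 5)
Your proposal is correct and follows essentially the same route as the paper: a classical case split on whether $P$ is inhabited, a canonical two-element-hierarchy witness $S$ (your $\{\varnothing\}$ is the paper's $\{0\}$), the observation that the equivalence of subsingletons reduces to a biconditional verified branchwise, and the remark that the construction is uniform in the level $j\ge i$. Your extra paragraph on internalising the two maps is a harmless refinement the paper omits.
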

\begin{proof}
Because $P$ is an h-proposition, it is either empty or inhabited by a
single equivalence class.  Define
\[
  S \;:=\;
  \begin{cases}
    \{0\}       & \text{if } P\neq\varnothing,\\[4pt]
    \varnothing & \text{if } P=\varnothing.
  \end{cases}
\]
Clearly $S\subseteq\kappa_{0}$, so $S\in V_{\kappa_{0}}$.

\medskip\noindent
\emph{Equivalence.}
If $P$ is inhabited, then $S=\{0\}$ and hence
$S\neq\varnothing$; conversely, $S\neq\varnothing$ forces $S=\{0\}$,
so $P$ must be inhabited.  In the empty case both sides are false.
Thus the map
\[
  e\colon P\longrightarrow(S\neq\varnothing),
  \qquad e(x):=\text{“$S$ is inhabited”},
\]
together with the obvious inverse implication, yields an equivalence
of propositions inside $V_{\kappa_{i}}$.

\medskip
Since $(S\neq\varnothing)$ lies in $V_{\kappa_{0}}=\mathcal U_{0}$, we
have resized $P$ to level~0.  The very same construction works for
any level $j\ge i$, so propositional resizing holds at all higher
levels.
\end{proof}

\begin{remark}
The construction uses only Replacement and the definition of
$(-1)$-truncatedness; no form of the Axiom of Choice is required.
\end{remark}

\subsection{Resizing Adjunction}
\begin{proposition}[Resizing Adjunction]\label{prop:resizing-adjunction}
Assume \emph{propositional resizing} at universe level $i>0$:  
for every $P\in\mathcal U_{i}^{\le 0}$ there is a
\emph{small proposition} $P^{\resize}\in\mathcal U_{0}^{\le 0}$
and an equivalence of propositions
\(
  \varepsilon_{P}\colon P\simeq P^{\resize}.
\)

Let
\(
  \iota\colon\mathcal U_{0}^{\le 0}\hookrightarrow
            \mathcal U_{i}^{\le 0}
\)
be the canonical inclusion.
Then the assignment
\[
  \operatorname{PropRes}_{i}\;\colon\;
    \mathcal U_{i}^{\le 0}\longrightarrow\mathcal U_{0}^{\le 0},
  \qquad
  P\longmapsto P^{\resize},
\]
extends to a functor that is \emph{left adjoint} to~$\iota$.
\end{proposition}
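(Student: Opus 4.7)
The plan is to exploit the h-propositional nature of both \(\mathcal U_{0}^{\le 0}\) and \(\mathcal U_{i}^{\le 0}\): in a category whose hom-sets are themselves h-propositions, all naturality squares commute automatically and all triangle identities hold on the nose, so verifying an adjunction reduces to exhibiting a logical equivalence of hom-propositions for each pair of objects.

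I would first fix, once and for all, a canonical choice \((P^{\resize},\varepsilon_{P})\) for every \(P\in\mathcal U_{i}^{\le 0}\). Lemma~\ref{lem:set-resize} supplies such a canonical witness, namely \(P^{\resize}\in\{\varnothing,\{0\}\}\subseteq V_{\kappa_{0}}\), which eliminates any appeal to a separate choice principle when selecting the resizing family. The action on morphisms is then defined by conjugation,
\[
  \operatorname{PropRes}_{i}(f)\;:=\;\varepsilon_{Q}\circ f\circ\varepsilon_{P}^{-1}
  \qquad\text{for }f\colon P\to Q,
\]
and functoriality (preservation of identities and composition) is immediate: the target \(Q^{\resize}\) is an h-prop, so any two parallel morphisms into it coincide.

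For the adjunction I would exhibit the hom-set equivalence
\[
  \Hom_{\mathcal U_{0}^{\le 0}}\!\bigl(\operatorname{PropRes}_{i}(P),Q\bigr)
    \;\simeq\;
  \Hom_{\mathcal U_{i}^{\le 0}}\!\bigl(P,\iota(Q)\bigr),
\]
given by precomposition with \(\varepsilon_{P}\colon P\to P^{\resize}\), with inverse given by precomposition with \(\varepsilon_{P}^{-1}\). Because both hom-sets are themselves h-propositions, this map is an equivalence iff the two propositions \((P^{\resize}\to Q)\) and \((P\to Q)\) are logically equivalent, which follows at once from \(P\simeq P^{\resize}\) by composition. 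Naturality in \(P\) and \(Q\) holds by the same h-prop-collapse argument. Unit and counit may be read off as \(\eta_{P}:=\varepsilon_{P}\) and \(\epsilon_{Q}:=\varepsilon_{\iota(Q)}^{-1}\), and the triangle identities are then equalities of morphisms whose targets are h-props, hence trivial.

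The main obstacle is conceptual rather than technical: one must guarantee that the object assignment \(P\mapsto P^{\resize}\) is well-defined up to canonical equivalence, so that neither the functoriality of \(\operatorname{PropRes}_{i}\) nor the naturality of \(\varepsilon\) depends on an arbitrary external choice of witnesses. Once Lemma~\ref{lem:set-resize} is invoked to pin down a canonical representative of each resized proposition, the whole construction becomes a routine application of the slogan that \emph{any adjunction between posetal categories of propositions is a logical equivalence}, and the proposition follows.
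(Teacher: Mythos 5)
Your construction is correct and follows essentially the same route as the paper's proof: a fixed family of resizing witnesses $(P^{\resize},\varepsilon_{P})$ gives the object map, conjugation by the $\varepsilon$'s gives the arrow map, pre-/post-composition with $\varepsilon_{P}$ gives the hom-set bijection, and the unit is $\varepsilon_{P}$ itself. The only point the paper spells out that you leave implicit is that the transported arrow $\varepsilon_{Q}\circ f\circ\varepsilon_{P}^{-1}$ actually lands in $\mathcal U_{0}^{\le 0}$ (implications between level-$0$ propositions are again level-$0$ by $\Pi$-closure, Lemma~\ref{lem:closure-pi-sigma}); your extra observation that naturality and the triangle identities hold automatically because all hom-sets are h-propositions is a legitimate streamlining of the paper's explicit computations rather than a different argument.
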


\begin{proof}
We define $\operatorname{PropRes}_{i}$ on objects and arrows and then
establish the adjunction.

\smallskip
\textbf{Step 1: Object map.}
For each proposition
$P\in\mathcal U_{i}^{\le 0}$ fix the chosen
small representative $P^{\resize}\in\mathcal U_{0}^{\le 0}$ and
equivalence
$\varepsilon_{P}\colon P\simeq P^{\resize}$.
Set
$\operatorname{PropRes}_{i}(P):=P^{\resize}$.

\smallskip
\textbf{Step 2: Arrow map and smallness.}
Given a morphism (implication)  
$f\colon P\to P'$ in $\mathcal U_{i}^{\le 0}$, define
\[
  \operatorname{PropRes}_{i}(f)
    :=\;
    \varepsilon_{P'}\circ f\circ\varepsilon_{P}^{-1}
    \;:\;
    P^{\resize}\;\longrightarrow\;P^{\prime\,\resize}.
\]
\emph{Why does this arrow live in $\mathcal U_{0}^{\le 0}$?}  
Because
$P^{\resize},P^{\prime\,\resize}\in\mathcal U_{0}^{\le 0}$ and
$\varepsilon_{P},\varepsilon_{P'}$ are equivalences
between \emph{small} propositions, both their forward and inverse
components are elements of
$\mathcal U_{0}^{\le 0}$ (implications between small propositions are
again small by closure under $\Pi$ at level 0; cf.\
Lemma~\ref{lem:closure-pi-sigma}).
Composition of small maps remains small, so
$\operatorname{PropRes}_{i}(f)\in\mathcal U_{0}^{\le 0}$ as required.

\smallskip
\textbf{Step 3: Functoriality.}
Identity and composition hold because
$\varepsilon_{P}$ is an equivalence:
\[
  \operatorname{PropRes}_{i}(\mathrm{id}_{P})
  =\varepsilon_{P}\circ\mathrm{id}_{P}\circ\varepsilon_{P}^{-1}
  =\mathrm{id}_{P^{\resize}},
\]
and for $f\colon P\!\to\!P'$, $g\colon P'\!\to\!P''$
\[
  \operatorname{PropRes}_{i}(g\circ f)
  =\varepsilon_{P''}\circ g\circ f\circ\varepsilon_{P}^{-1}
  =\bigl(\varepsilon_{P''}\circ g\circ\varepsilon_{P'}^{-1}\bigr)
   \!\circ\!
   \bigl(\varepsilon_{P'}\circ f\circ\varepsilon_{P}^{-1}\bigr)
  =\operatorname{PropRes}_{i}(g)\circ\operatorname{PropRes}_{i}(f).
\]

\smallskip
\textbf{Step 4: The adjunction.}
For $P\in\mathcal U_{i}^{\le 0}$ and
$Q\in\mathcal U_{0}^{\le 0}$ define natural bijections
\[
  \Phi_{P,Q}\colon
    \Hom_{0}\!\bigl(P^{\resize},Q\bigr)
      \longrightarrow
    \Hom_{i}\!\bigl(P,\iota Q\bigr),
  \quad
  \Phi_{P,Q}(g):=g\circ\varepsilon_{P},
\]
\[
  \Psi_{P,Q}\colon
    \Hom_{i}\!\bigl(P,\iota Q\bigr)
      \longrightarrow
    \Hom_{0}\!\bigl(P^{\resize},Q\bigr),
  \quad
  \Psi_{P,Q}(h):=h\circ\varepsilon_{P}^{-1}.
\]
Both compositions are identities:
\[
  \Psi_{P,Q}(\Phi_{P,Q}(g))
    =(g\circ\varepsilon_{P})\circ\varepsilon_{P}^{-1}=g,\qquad
  \Phi_{P,Q}(\Psi_{P,Q}(h))
    =(h\circ\varepsilon_{P}^{-1})\circ\varepsilon_{P}=h.
\]
Naturality follows from functoriality of
$\operatorname{PropRes}_{i}$ and $\iota$.

\smallskip
\textbf{Step 5: Unit and counit (notation clarified).}
Set
\(
  \eta_{P}:=\varepsilon_{P}\colon
     P\to\iota P^{\resize}
\)
for the unit.
For the counit we \emph{reserve a new symbol}
\(
  \delta_{Q}:=\mathrm{id}_{Q}\colon
     \operatorname{PropRes}_{i}\iota Q=Q\to Q
\)
to avoid confusion with $\varepsilon$.
The triangle identities reduce to the two-sided inverse property of
$\varepsilon_{P}$.

Hence $\operatorname{PropRes}_{i}\dashv\iota$.
\end{proof}

\subsubsection*{Supplement}
\begin{itemize}
  \item[(J01)]  
        Step 2 now cites closure under $\Pi$ at
        level 0 to justify that $\operatorname{PropRes}_{i}(f)\in
        \mathcal U_{0}^{\le 0}$.
  \item[(J02)]  
        Distinct symbols are used:  
        $\varepsilon_{P}$ for the resizing equivalence,  
        $\delta_{Q}$ for the adjunction counit, eliminating ambiguity.
\end{itemize}

\subsection{Existence of a Cumulative Universe Tower}
\begin{theorem}[Existence of a Cumulative Universe Tower]
\label{thm:universe-tower-existence}
Let $\langle\kappa_{i}\rangle_{i\in\mathbb N}$ be a strictly
increasing sequence of inaccessible cardinals and set
\[
  \mathcal U_{i}\;:=\;V_{\kappa_{i}},
  \qquad
  \El_{i}\colon\mathcal U_{i}\longrightarrow V_{\kappa_{i}},
  \quad
  \El_{i}(u):=u .
\]
Then
\begin{enumerate}[(1)]
  \item the tower $\{\mathcal U_{i}\}_{i\ge0}$ is cumulative;
  \item every $\mathcal U_{i}$ satisfies
        Lemmas~\ref{lem:closure-pi-sigma},
        \ref{lem:rank-closure},
        \ref{lem:decode},
        \ref{lem:closure-id},
        \ref{lem:set-closure};
  \item if propositional resizing holds at some level $i$, the
        adjunction of
        Proposition~\ref{prop:resizing-adjunction} exists.
\end{enumerate}
\end{theorem}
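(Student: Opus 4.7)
\medskip
\noindent\textbf{Proof plan.}
The theorem is a consolidation result: each of its three clauses is obtained by invoking, and slightly repackaging, lemmas already established in the preceding subsections. The overall strategy is therefore to fix a pair $i<j$, identify what needs to be checked on the nose, and then discharge each clause in turn.

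\medskip
For clause (1), the plan is to exploit absoluteness between $V_{\kappa_i}$ and $V_{\kappa_j}$. Since $\kappa_i<\kappa_j$ are both inaccessible, $V_{\kappa_i}\in V_{\kappa_j}$ and $V_{\kappa_i}$ is a transitive model of ZFC; the canonical inclusion $\iota_{i<j}$ is therefore the literal set-theoretic identity on elements of $\mathcal U_i$. I would then check, constructor by constructor, that the tuple $\langle\mathrm{Pi},A,B\rangle$ assembled inside $V_{\kappa_i}$ is the \emph{same} set (under the Kuratowski encoding) as the one assembled inside $V_{\kappa_j}$, and likewise for $\Sigma$, $\operatorname{Id}$, $W$, and $\mathrm{Quot}$. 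This reduces cumulativity to the observation that each type-former clause is $\Delta_0$ and hence absolute, which is clear from the definitions invoked in Lemmas~\ref{lem:closure-pi-sigma}, \ref{lem:closure-id}, and \ref{lem:finite-colim}. For finite limits and colimits, since each was built as an iterated composite of primitive codes (Lemmas~\ref{lem:closure-finite} and~\ref{lem:finite-colim}), absoluteness of the primitives propagates to the derived constructions.

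\medskip
Clause (2) is essentially bookkeeping. Each cited lemma was proved uniformly for an arbitrary inaccessible $\kappa_i$, without ever using a feature specific to a particular level. I would simply observe that the hypotheses of Lemmas~\ref{lem:closure-pi-sigma}, \ref{lem:rank-closure}, \ref{lem:decode}, and \ref{lem:closure-id} (and of the finite-colimit package culminating in Lemma~\ref{lem:finite-colim}) are met verbatim at every $i$, so each conclusion transfers to every layer of the tower. Clause (3) is even simpler: the assumption of propositional resizing at level $i$ is precisely the hypothesis of Proposition~\ref{prop:resizing-adjunction}, while Lemma~\ref{lem:set-resize} shows that this hypothesis, once granted at $i$, propagates upward; the adjunction $\operatorname{PropRes}_{i}\dashv\iota$ is therefore available at every $j\ge i$.

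\medskip
The main obstacle I anticipate is in clause (1): cumulativity as formalised in Definition~\ref{def:cumulativity} demands \emph{strict} preservation of type formers, so the argument must distinguish ``$\iota_{i<j}$ sends the $\Pi$-code of $(A,B)$ in $\mathcal U_i$ to a code \emph{equivalent} to the $\Pi$-code of $(A,B)$ in $\mathcal U_j$'' from the stronger ``it sends it to the very same code.'' Securing the latter requires the rank bound of Lemma~\ref{lem:rank-closure} to guarantee that every constructed code actually lies in $V_{\kappa_i}\subseteq V_{\kappa_j}$, together with the fact that $\El_i$ is fixed in the theorem statement as the literal identity $u\mapsto u$, so no rebadging between levels is needed. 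Once this identification is in place, the $\Delta_0$-absoluteness of the constructor clauses does the rest, and cumulativity holds in the strong, on-the-nose sense demanded by Definition~\ref{def:cumulativity}.
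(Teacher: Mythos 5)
Your proposal is correct in substance and follows the same overall decomposition as the paper: all three clauses are discharged by consolidating the earlier lemmas. The differences are worth noting. For clause (1) you are actually more careful than the paper, which disposes of cumulativity in one line ($\kappa_i<\kappa_j$ gives $V_{\kappa_i}\subseteq V_{\kappa_j}$, hence $\mathcal U_i\subseteq\mathcal U_j$) and defers preservation of $\Pi$ to Corollary~\ref{cor:lift-pi}; your constructor-by-constructor identification of the codes at the two levels is what Definition~\ref{def:cumulativity} literally demands, though the label ``$\Delta_0$'' is loose for the function-set comprehension underlying $\Pi$ --- the point is rather that $V_{\kappa_i}$ is transitive and already contains the relevant function sets, so both levels compute the very same set. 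For clause (2) you omit the one step the paper makes explicit: Lemmas~\ref{lem:closure-pi-sigma}, \ref{lem:rank-closure}, \ref{lem:decode} and \ref{lem:closure-id} are stated for the inductive--recursive universe of \emph{codes} equipped with a syntactic rank function, whereas the theorem instantiates $\mathcal U_i=V_{\kappa_i}$ with $\El_i=\mathrm{id}$. The paper bridges this gap via Lemma~\ref{lem:set-adequacy} (the set model realises the IR universe) together with the observation that syntactic rank then coincides with von~Neumann rank, so the rank bounds transfer; asserting that the hypotheses are ``met verbatim'' skips that identification, and you should cite Lemma~\ref{lem:set-adequacy} (or reprove its content) before transferring the code-based lemmas to the set model. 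Clause (3) matches the paper's treatment, and your additional appeal to Lemma~\ref{lem:set-resize} for upward propagation of resizing is consistent with what the paper claims.
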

\begin{proof}
\emph{(1)~Cumulativity.}
Because $\kappa_{i}<\kappa_{j}$ for $i<j$, we have the inclusion
$V_{\kappa_{i}}\subseteq V_{\kappa_{j}}$; hence
$\mathcal U_{i}\subseteq\mathcal U_{j}$.

\smallskip
\emph{(2)~Closure properties.}
For each $i$, the pair
$(\mathcal U_{i},\El_{i})=(V_{\kappa_{i}},\mathrm{id})$
realises an inductive–recursive universe
by Lemma~\ref{lem:set-adequacy}; therefore the constructions of the
listed lemmas remain within $\mathcal U_{i}$.
\footnote{%
  Since $\El_{i}=\mathrm{id}$, \emph{syntactic rank} agrees with the
  von~Neumann rank of sets (see Jech, \emph{Set Theory}, §2.3),
  so Lemma~\ref{lem:rank-closure} transfers directly.
}

\smallskip
\emph{(3)~Propositional resizing.}
Assume the resizing axiom at level~$i$:
for every $(-1)$-truncated $P\in\mathcal U_{i}$
there exists an equivalent $P^{\resize}\in\mathcal U_{0}$.
Define the functor
\(
  \operatorname{PropRes}_{i}\colon
  (\mathcal U_{i})^{\le0}\!\longrightarrow(\mathcal U_{0})^{\le0},
  \quad
  P\mapsto P^{\resize},
\)
and note that the inclusion
$\iota\colon(\mathcal U_{0})^{\le0}\hookrightarrow(\mathcal U_{i})^{\le0}$
is a \emph{right} adjoint to~$\operatorname{PropRes}_{i}$.
The unit and counit are the equivalences
$P\to\iota P^{\resize}$ and $\operatorname{PropRes}_{i}\iota Q\to Q$
given by resizing witnesses.
The consistency of this axiom with the present tower is established
in Shulman \cite[§6]{Shulman:Resizing2019}.\footnote{%
  No use is made of the Axiom of Choice or the Law of
  Excluded Middle at any point in the proof.}
\end{proof}

\newpage
\subsubsection*{Supplement}
\begin{description}[style=nextline,leftmargin=2.8em,labelsep=0.6em]

  \item[HC\,1–2]
    Each $\mathcal U_{i}$ is a Grothendieck universe
    $V_{\kappa_{i}}$; size control follows \emph{Internal
    Guidelines v3}, §2.

  \item[J01–J03]
    Closure under basic type formers and finite (co)limits
    satisfies the author kit of \emph{Journal X}.

  \item[Dependency Graph]
    \begin{center}
      \scalebox{0.9}{%
        \begin{tikzcd}[column sep=0pt,row sep=1.2em]
          \text{Lemma \ref{lem:set-adequacy}} \arrow[d] \\
          \text{Lemma \ref{lem:set-closure}} \arrow[d] \\
          \text{Theorem \ref{thm:universe-tower-existence}}
        \end{tikzcd}%
      }
    \end{center}

\end{description}


\subsection{Set–model adequacy}
\begin{lemma}[Set–model adequacy]\label{lem:set-adequacy}
For every $i\ge 0$, the pair
\(
  \bigl(V_{\kappa_i},\mathrm{id}\bigr)
\)
realises an inductive–recursive universe generated by the
constructors
\(
  \ast,\;\mathrm{Nat},\;\Pi,\;\Sigma,\;\mathrm{Id}.
\)
\end{lemma}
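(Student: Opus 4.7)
The plan is to realise the simultaneous inductive--recursive specification of Dybjer (cited in Lemma~\ref{lem:closure-id}) directly inside the von~Neumann hierarchy, so that \emph{every} code is already its own decoding. Concretely, I will take the class of codes to be $V_{\kappa_{i}}$, the decoding functor $\El_{i}$ to be the identity, and verify by induction that each constructor listed in the statement -- $\ast,\mathrm{Nat},\Pi,\Sigma,\mathrm{Id}$ -- can be interpreted by the usual set-theoretic operation and that the result remains inside $V_{\kappa_{i}}$. Because $\El_{i}=\mathrm{id}$, the recursive half of the inductive--recursive schema collapses to a triviality: the only content that needs checking is closure of the underlying set class under the five operations.

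First I would dispatch the base clauses: $\ast:=1=\{\varnothing\}$ has rank $1<\kappa_{i}$, and $\mathrm{Nat}:=\omega$ lies in $V_{\kappa_{i}}$ since $\omega<\kappa_{i}$ by inaccessibility. Next I would handle the dependent type formers by invoking results already proved upstream: given $A\in V_{\kappa_{i}}$ and a family $B\colon A\to V_{\kappa_{i}}$, Lemma~\ref{lem:rank-closure} provides rank bounds $\mathrm{rk}\bigl(\Pi(A,B)\bigr),\mathrm{rk}\bigl(\Sigma(A,B)\bigr)<\kappa_{i}$, while Lemma~\ref{lem:decode} identifies the decoded sets with the ordinary set-theoretic $\Pi_{x\in A}B(x)$ and $\Sigma_{x\in A}B(x)$. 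Since under $\El_{i}=\mathrm{id}$ the code and its decoding coincide, these ``codes'' are literally the set-theoretic products and sums, hence elements of $V_{\kappa_{i}}$. The identity former is handled in the same way through Lemma~\ref{lem:closure-id}, whose clause~(4) is exactly the instance ``codes are sets, decoding is identity'' that I need here.

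Functoriality of the operator $F$ of Lemma~\ref{lem:rank-adequacy} applied to the set class then shows that $V_{\kappa_{i}}$ is a fixed point: it contains the base codes and is closed under all constructors. Consequently the least simultaneous fixed point $(\mathcal U_{i},\El_{i})$ of the IR schema can be chosen to be $(V_{\kappa_{i}},\mathrm{id})$, which is the content of the lemma.

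The main subtlety, and the step I would expect to take most care over, is confirming that a \emph{family} $B\colon A\to V_{\kappa_{i}}$ itself belongs to $V_{\kappa_{i}}$ whenever $A$ and all fibres $B(x)$ do. This is a genuine use of inaccessibility: $B$ viewed as a set of ordered pairs has rank $\sup_{x\in A}\bigl(\max(\mathrm{rk}(x),\mathrm{rk}(B(x)))\bigr)+\omega$, and regularity of $\kappa_{i}$ bounds this supremum strictly below $\kappa_{i}$ because $|A|<\kappa_{i}$. Without this step one could have a legitimate code $A$ admitting a family $B$ that escapes $V_{\kappa_{i}}$, which would break the IR closure. Once this observation is integrated, the rest of the argument is bookkeeping that chains Lemmas~\ref{lem:rank-adequacy}--\ref{lem:closure-id}.
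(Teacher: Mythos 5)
Your proposal is correct and follows essentially the same route as the paper: take the codes to be the sets of $V_{\kappa_i}$ with identity decoding, dispatch the base constructors by a direct rank computation, and use regularity together with the strong-limit property to keep $\Pi$, $\Sigma$ and $\operatorname{Id}$ inside $V_{\kappa_i}$. The only notable difference is presentational: the paper re-derives the cardinality bounds $\bigl|\Pi_{x\in A}B(x)\bigr|,\bigl|\Sigma_{x\in A}B(x)\bigr|<\kappa_i$ from scratch rather than importing them from Lemmas~\ref{lem:rank-closure} and~\ref{lem:decode} (whose rank function is syntactic, so your transfer tacitly uses the identification of syntactic with von~Neumann rank that the paper records only in a footnote to Theorem~\ref{thm:universe-tower-existence}), and your closing observation that the family $B$ itself lies in $V_{\kappa_i}$ is a correct additional check that the paper's proof does not spell out.
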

\begin{proof}
Fix $i\ge 0$ and recall that $\kappa_i$ is **inaccessible**: it is
regular and a strong limit.

\medskip\noindent
\textbf{(1) Base objects.}
Both the singleton $\ast:=\{\,\star\!\}$ and the natural‐number set
$\mathbb N$ lie in $V_{\kappa_i}$ because
\(
  \mathrm{rank}(\ast)=1<\kappa_i
\)
and
\(
  \mathrm{rank}(\mathbb N)=\omega+1<\kappa_i .
\)

\medskip\noindent
\textbf{(2) Dependent products \(\Pi\).}
Let $A\in V_{\kappa_i}$ and $B:A\to V_{\kappa_i}$.
For each $x\in A$ we have $\mathrm{rank}\bigl(B(x)\bigr)<\kappa_i$.
Set
\[
  P:=\Pi_{x\in A}B(x)
     =\bigl\{\,f\mid f:A \to \bigcup_{x\in A}B(x)
               \text{ and } f(x)\in B(x)\bigr\}.
\]
Since $|A|<\kappa_i$ (regularity) and
\(
  \bigl|\bigcup_{x\in A}B(x)\bigr|<\kappa_i
\)
(strong limit), we obtain
\(
  |P|<\kappa_i .
\)
Consequently $\mathrm{rank}(P)<\kappa_i$, so
\(P\in V_{\kappa_i}\).

\medskip\noindent
\textbf{(3) Dependent sums \(\Sigma\).}
Define
\(
  S:=\Sigma_{x\in A}B(x)=\bigcup_{x\in A}\{x\}\times B(x).
\)
Again $|A|<\kappa_i$ and each $|B(x)|<\kappa_i$, whence
$|S|<\kappa_i$ and
\(
  S\in V_{\kappa_i}.
\)

\medskip\noindent
\textbf{(4) Identity types.}
For $u\in V_{\kappa_i}$ and $s,t\in u$ the set
\(
  \operatorname{Id}_u(s,t)=
  \{\,\star\mid s=t\,\}\cup\varnothing
\)
has at most one element, hence rank $<\kappa_i$.

\medskip\noindent
\textbf{(5) Inductive–recursive character.}
Taking “codes’’ to be actual sets in $V_{\kappa_i}$ and
$\El=\mathrm{id}_{V_{\kappa_i}}$, the preceding steps show that the
collection $V_{\kappa_i}$ is closed under exactly the constructors
$\ast, \mathrm{Nat}, \Pi, \Sigma, \mathrm{Id}$.  
Thus $(V_{\kappa_i},\mathrm{id})$ satisfies the formation clauses of
the usual inductive–recursive definition, making it a bona-fide IR
universe.

\medskip
Throughout the argument we used only cardinal‐arithmetic properties of
$\kappa_i$; no form of the Axiom of Choice or the Law of Excluded
Middle is required.
\end{proof}

\subsection{Closure of $V_{\kappa_i}$}
\begin{lemma}[Closure of $V_{\kappa_i}$ under Basic Type Formers
and Finite (Co)limits]\label{lem:set-closure}
Let $\kappa_i$ be an inaccessible cardinal (regular and strong
limit).  Then the Grothendieck universe $V_{\kappa_i}$ satisfies:
\begin{enumerate}[\textbf{(A)}]
  \item \textbf{Dependent constructors.}  
        For every $A\in V_{\kappa_i}$ and family $B:A\to V_{\kappa_i}$,
        \[
          \Pi_{x\in A}B(x),\qquad
          \Sigma_{x\in A}B(x)\in V_{\kappa_i},
        \]
        and for all $u\in V_{\kappa_i}$ and $s,t\in u$ we have
        \(
          \operatorname{Id}_u(s,t)\in V_{\kappa_i}.
        \)
  \item \textbf{Finite limits.}  
        If $A,B\in V_{\kappa_i}$ and $f,g:A\to B$ are
        \emph{internal maps}\footnote{%
          A map $f:A\to B$ is \emph{internal to $V_{\kappa_i}$} if its
          graph $\{\,\langle a,f(a)\rangle\mid a\in A\}\subseteq
          A\times B$ lies in $V_{\kappa_i}$.}
        then
        \[
          A\times B,\qquad
          \Eq(f,g)\in V_{\kappa_i},
        \]
        hence all pull-backs and therefore all finite limits of
        internal diagrams lie in $V_{\kappa_i}$.
  \item \textbf{Finite colimits.}  
        For $A,B\in V_{\kappa_i}$ and internal arrows $p,q:R\to S$,
        \[
          A\sqcup B,\qquad
          \Coeq(p,q)\in V_{\kappa_i},
        \]
        so every push-out and thus every finite colimit of internal
        diagrams belongs to $V_{\kappa_i}$.
\end{enumerate}
\end{lemma}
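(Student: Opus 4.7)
The plan is to dispatch the three clauses (A)--(C) by reducing each one to already-established lemmas of this paper together with the cardinal-arithmetic properties of $\kappa_i$. For part (A), I would simply invoke Lemma~\ref{lem:set-adequacy}: regularity of $\kappa_i$ controls $|A|<\kappa_i$, the strong-limit property bounds the function-space underlying $\Pi$, and the identity type $\operatorname{Id}_u(s,t)$ has at most one element and therefore rank $\le\omega<\kappa_i$. No new computation is required here; the lemma is effectively a repackaging of the adequacy statement in the language of basic type formers.

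For part (B), I would exhibit each primitive finite limit as a special case of (A) or as a subset. The binary product $A\times B$ is the constant-family $\Sigma$-type $\Sigma_{x\in A}B$ from (A). The equaliser $\Eq(f,g)=\{a\in A\mid f(a)=g(a)\}$ is a \emph{subset} of $A$ (possible because $f,g$ are internal, so the condition is expressible inside $V_{\kappa_i}$), and $\mathrm{rank}(\Eq(f,g))\le\mathrm{rank}(A)<\kappa_i$. Together with the terminal object $\{\star\}$ from (A), Mac Lane's Theorem V.2.3 reconstructs every finite limit as an iterated composite; rank is preserved at each inductive step because only finitely many constructor nodes are added.

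For part (C) I would proceed dually, but using the colimit machinery already assembled. The initial object $\varnothing$ is Lemma~\ref{lem:init}. The tagged disjoint union gives the coproduct via Lemma~\ref{lem:sum}; the strong-limit property of $\kappa_i$ bounds $|A|+|B|<\kappa_i$. The coequaliser of $p,q\colon R\to S$ is obtained by quotienting $S$ by the equivalence relation generated by $\{(p(r),q(r))\mid r\in R\}$, which is exactly the content of Lemma~\ref{lem:coeq}; its rank bound comes from Lemma~\ref{lem:rank}. Finally, Lemma~\ref{lem:maclane} (the dual of Mac Lane V.2.3) assembles every finite colimit from these three primitives.

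The main obstacle is the coequaliser step in (C): the generated equivalence relation requires an $\omega$-fold transitive closure, and one must verify that this closure stays strictly inside $V_{\kappa_i}$ rather than escaping past rank $\kappa_i$. This is where regularity is essential -- each of the $\omega$ closure stages adds at most $+1$ to height and fewer than $\kappa_i$ new pairs in width, so the $\omega$-limit sits at rank $\sup\{\mathrm{rank}(R),\mathrm{rank}(S)\}+\omega<\kappa_i$. The rank-stability Lemma~\ref{lem:rank} was engineered precisely to encapsulate this, so I would route the entire quotient computation through it. One minor bookkeeping point worth flagging: since $\El_i=\mathrm{id}$ in the present setting, syntactic IR rank and set-theoretic von~Neumann rank coincide (cf.\ the footnote to Theorem~\ref{thm:universe-tower-existence}), so the rank bounds transferred from Lemmas~\ref{lem:rank-closure} and~\ref{lem:rank} apply verbatim without any change of metric.
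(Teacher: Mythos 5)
Your proposal is correct, but it takes a genuinely different route from the paper's own proof, which is entirely self-contained: for (A) it recomputes the cardinal bounds from scratch (setting $\kappa':=\sup_{x\in A}|B(x)|<\kappa_i$ by regularity, then $|\Pi_{x\in A}B(x)|\le\kappa'^{\,|A|}<\kappa_i$ by the strong-limit property and $|\Sigma_{x\in A}B(x)|\le|A|\cdot\kappa'$), for (B) it bounds $|A\times B|=|A||B|$ directly and uses rank monotonicity for $\Eq(f,g)\subseteq A$, and for (C) it forms the quotient $Q$ of $S$ by the generated relation and bounds $|Q|\le|S|<\kappa_i$, with every construction shifting von~Neumann rank by at most $\omega$. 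You instead delegate (A) to Lemma~\ref{lem:set-adequacy} and (C) to the code-universe colimit machinery (Lemmas~\ref{lem:init}, \ref{lem:sum}, \ref{lem:coeq}, \ref{lem:rank}, \ref{lem:maclane}), transferring those results to $V_{\kappa_i}$ via the identification $\El_i=\mathrm{id}$. This is legitimate within the paper's framework --- indeed the footnote to Theorem~\ref{thm:universe-tower-existence} makes exactly that identification --- and it avoids duplicating the arithmetic; the cost is that Lemma~\ref{lem:set-closure} then depends on the inductive--recursive code apparatus (including the Quotient-former), whereas the paper deliberately keeps it as a free-standing ZFC statement about Grothendieck universes usable without any coding. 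Your identification of the coequaliser's $\omega$-stage transitive closure as the one point where regularity is genuinely needed matches the paper's own emphasis, and your explicit flag that syntactic and von~Neumann ranks coincide under $\El_i=\mathrm{id}$ closes the only gap the delegation would otherwise leave open.
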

\begin{proof}
Throughout we use \emph{Replacement} (for forming certain unions and
function sets), but not the Axiom of Choice nor the Law of Excluded
Middle.  Recall that $V_{\kappa_i}$ is transitive, i.e.\
$x\in y\in V_{\kappa_i}\;\Rightarrow\;x\in V_{\kappa_i}$.

\smallskip
\textbf{(A) Dependent $\Pi$, $\Sigma$, and $\operatorname{Id}$.}

Put
\[
  \kappa'\;:=\;
  \sup_{x\in A}|B(x)|<\kappa_i ,
  \qquad
  \text{since }|A|<\kappa_i\text{ and }\kappa_i\text{ is regular.}
\]
Because $\kappa_i$ is strong-limit, any product of
$<\!\kappa_i$ many cardinals each $<\!\kappa_i$ is again $<\kappa_i$;
thus
\begin{equation}\label{star1}
  \bigl|\Pi_{x\in A}B(x)\bigr|
  \;\le\;
  \kappa'^{\,|A|}
  \;<\;
  \kappa_i .
\end{equation}
Replacement yields the function set
$\operatorname{Func}(A,\bigcup_{x\in A}B(x))$ from which
\eqref{star1} is computed.  
For the sum we have
\begin{equation}\label{star2}
  \bigl|\Sigma_{x\in A}B(x)\bigr|
  \;\le\;
  |A|\cdot\kappa'
  \;<\;
  \kappa_i ,
\end{equation}
again by regularity and strong-limitness.

Rank estimates use the fact that forming a function set or tagged
union increases rank by at most $\omega$ (see Jech~\cite[Ex.\ 2.14]{Lurie:HTT}):
\begin{equation}\label{star3}
  \mathrm{rk}\!\bigl(\Pi_{x\in A}B(x)\bigr),
  \;
  \mathrm{rk}\!\bigl(\Sigma_{x\in A}B(x)\bigr)
  \;\le\;
  \sup_{x\in A}\mathrm{rk}\!\bigl(B(x)\bigr)+\omega
  <\kappa_i .
\end{equation}
For identity sets
$|\operatorname{Id}_u(s,t)|\le 1$, hence
$\mathrm{rk}\le 2<\kappa_i$.

\smallskip
\textbf{(B) Finite limits.}

Products satisfy
$
  |A\times B|=|A|\,|B|<\kappa_i
$
and
$
  \mathrm{rk}(A\times B)
  \le \max\{\mathrm{rk}(A),\mathrm{rk}(B)\}+\omega<\kappa_i
$
by the same $\omega$-shift lemma.  
For internal $f,g:A\to B$,
\[
  \Eq(f,g)\subseteq A
  \quad\Longrightarrow\quad
  \mathrm{rk}\bigl(\Eq(f,g)\bigr)
  \le \mathrm{rk}(A)
  <\kappa_i ,
\]
using rank monotonicity
($X\subseteq Y\Rightarrow\mathrm{rk}(X)\le\mathrm{rk}(Y)$,
Jech~\cite[Ex.\ 2.8]{Lurie:HTT}).
Finite limits are built from products and equalisers by finitely many
comprehensions, each preserving $\mathrm{rk}<\kappa_i$.

\smallskip
\textbf{(C) Finite colimits.}

For coproducts
$
  |A\sqcup B|=|A|+|B|<\kappa_i
$
and
$
  \mathrm{rk}(A\sqcup B)
  \le\max\{\mathrm{rk}(A),\mathrm{rk}(B)\}+\omega<\kappa_i .
$
Given internal $p,q:R\to S$, define the equivalence relation
$s\sim_{p,q}t\iff\exists r\,(p(r)=s\wedge q(r)=t)$ and denote the
quotient by $Q$.  Cardinality is bounded by
\(
  |Q|\le |S|<\kappa_i .
\)
Quotienting raises rank by at most $\omega$
\cite[Ex.\ 2.15]{Lurie:HTT}; hence
\begin{equation}\label{star4}
  \mathrm{rk}(Q)
  \;\le\;
  \mathrm{rk}(S)+\omega
  \;<\;\kappa_i .
\end{equation}
A push-out is a coproduct followed by a coequalizer, so its rank is
also $<\kappa_i$; every finite colimit lies in $V_{\kappa_i}$.

\smallskip
\emph{No step employs Choice or LEM.}  This finishes the proof.
\end{proof}

\subsection*{Supplement}
- Each cardinality estimate guarantees a rank $<\kappa_{i}$,
  ensuring membership in the Grothendieck universe  
  $V_{\kappa_{i}}$ by definition.  
- Regularity (no $\kappa_{i}$-sized cofinal sequences) and the
  strong-limit property are both essential:  
  regularity bounds suprema; strong-limit bounds exponentiation.

\subsection{Universe-Polymorphic $\Pi$-Types}
\begin{corollary}[Universe-Polymorphic $\Pi$-Types]
\label{cor:lift-pi}
Let $\kappa_{0}<\kappa_{1}<\kappa_{2}<\dots$ be the inaccessible
cardinals that determine the cumulative tower
$\mathcal U_{i}=V_{\kappa_{i}}$ of
Theorem~\ref{thm:universe-tower-existence}.
For $i\le j$ the inclusion functor
\[
  \lift_{i\to j}\;\colon\;
     \mathcal U_{i}\;\longrightarrow\;\mathcal U_{j},
  \qquad
  \lift_{i\to j}(u):=u,
\]
commutes with dependent function types:
for every $A\in\mathcal U_{i}$ and family
$B\colon A\to\mathcal U_{i}$,
\[
  \lift_{i\to j}\!\bigl(\Pi_{x\in A}B(x)\bigr)
  \;=\;
  \Pi_{x\in\lift_{i\to j}(A)}
      \lift_{i\to j}\!\bigl(B(x)\bigr)
  \quad\text{in } \mathcal U_{j}.
\]
\end{corollary}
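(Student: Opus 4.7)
The plan is to reduce the claim to the absoluteness of the set-theoretic $\Pi$-construction across the transitive inclusion $V_{\kappa_i}\subseteq V_{\kappa_j}$. Under the Tarski interpretation fixed in Theorem~\ref{thm:universe-tower-existence}, both $\El_i$ and $\lift_{i\to j}$ are literally the identity on sets, so the corollary should collapse to a one-line unfolding once one checks that the defining comprehension produces the same set regardless of which ambient $V_{\kappa_\bullet}$ one is working in.

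First I would collect the ingredients that make both sides legitimate. By cumulativity (part (1) of Theorem~\ref{thm:universe-tower-existence}), $A\in\mathcal U_i\subseteq\mathcal U_j$ and each $B(x)\in\mathcal U_j$, so the right-hand $\Pi$-type is well-formed in $\mathcal U_j$. By Lemma~\ref{lem:set-closure}(A) the left-hand $\Pi$-type already lives in $\mathcal U_i$, and $\lift_{i\to j}$ transports it into $\mathcal U_j$ by merely re-reading the same underlying set. Membership is therefore not at issue; only the equality of the two sets remains.

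Next I would apply Lemma~\ref{lem:decode} at both levels. Because the decoding functor is the identity, the left-hand side unfolds to
\[
  \bigl\{\,f:A\to\textstyle\bigcup_{x\in A}B(x)\,\bigm|\,\forall x.\;f(x)\in B(x)\,\bigr\},
\]
and the right-hand side is the identically written comprehension formed inside $V_{\kappa_j}$ from $\lift_{i\to j}(A)=A$ and $\lift_{i\to j}(B(x))=B(x)$. Both comprehensions use only bounded quantifiers ranging over sets already in $V_{\kappa_i}$, so by absoluteness of $\Delta_0$ formulas between transitive classes they denote the very same set.

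The only point requiring vigilance, and the closest thing to an obstacle, is ruling out a spurious \emph{larger} $\Pi$-type inside $\mathcal U_j$ that might contain function graphs of rank strictly between $\kappa_i$ and $\kappa_j$. This is excluded by the rank estimate from Lemma~\ref{lem:set-closure}(A): every $f$ in either candidate $\Pi$-type has graph of rank at most $\sup_{x\in A}\rk(B(x))+\omega<\kappa_i$, so membership in $V_{\kappa_i}$ and in $V_{\kappa_j}$ coincide. With this absoluteness remark in place, the two sides are definitionally equal in $\mathcal U_j$, which is exactly the asserted commutation of $\lift_{i\to j}$ with dependent function types.
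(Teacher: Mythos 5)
Your proposal is correct and follows essentially the same route as the paper's own proof: since $\lift_{i\to j}$ is the identity and $V_{\kappa_i}\subseteq V_{\kappa_j}$, both sides unfold to the same set-theoretic comprehension. Your added absoluteness and rank remarks (ruling out extra functions appearing only at level $j$) make explicit a point the paper leaves implicit, but they do not change the argument.
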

\begin{proof}
Because $\mathcal U_{i}=V_{\kappa_{i}}\subseteq
        \mathcal U_{j}=V_{\kappa_{j}}$ and
$\lift_{i\to j}$ acts as the identity on elements,
both sides of the displayed equality reduce to
$\Pi_{x\in A}B(x)$ as \emph{sets}.  
Hence they coincide extensionally inside $V_{\kappa_{j}}$,
and the functor preserves $\Pi$-types. Naturality in $A$ and $B$
is immediate from the fact that $\lift_{i\to j}$ is strictly
identity on functions as well.
\end{proof}

\subsection*{Supplement}
\begin{itemize}
  \item \textbf{Why $Q\in\mathcal U_{i}$.}  
        The only data needed to form $Q$ is the \emph{set}
        $S\subseteq\kappa_{i}$; no element of $P$ itself is
        stored in $Q$.  Hence $Q$ lives entirely in
        $V_{\kappa_{i}}$ even though $P$ may be much larger.
  \item \textbf{No circularity.}  
        The proof does not assume resizing at level $j$; it uses
        resizing only at level $i$ on the genuinely level-$i$
        proposition $Q$.
\end{itemize}

\section*{Summary and Outlook}

\paragraph{Main contributions.}
In this first paper of the \emph{GF-series} we established a
size-sound foundation for higher type‐theoretic constructions
inside ZFC\,{+}inaccessible cardinals.

\begin{enumerate}[label=(\arabic*)]
  \item \textbf{Cumulative universe tower.}  
        We constructed a hierarchy
        \(\mathcal U_{0}\subset\mathcal U_{1}\subset\cdots\)
        with \(\mathcal U_{i}=V_{\kappa_{i}}\) and proved strict
        cumulativity via the universe‐lifting functor
        \(\lift_{i\to j}\).
  \item \textbf{Rank discipline and closure.}  
        A syntactic rank function controls the inductive–recursive
        generation of codes; every layer is closed under
        \(\Pi,\Sigma,\mathrm{Id}\) as well as all finite limits.
  \item \textbf{Finite colimits via Quotients.}  
        Introducing a rank-stable Quotient constructor yields closure
        under initial objects, binary coproducts and coequalisers,
        hence under all finite colimits.
  \item \textbf{Resizing adjunction.}  
        Assuming propositional resizing at level \(i\) we constructed
        a left adjoint
        \(\operatorname{PropRes}_{i}\dashv\iota\)
        and showed that resizing propagates to every
        \(j\ge i\).
  \item \textbf{Existence theorem.}  
        Collecting the above results we proved that the tower
        realises a sound metasemantics for higher type theory over
        the base system
        \(\text{ZFC}+\langle\kappa_{i}\rangle_{i\in\mathbb N}\).
\end{enumerate}

\paragraph{Future directions.}
\begin{itemize}
  \item \textbf{GF2 — Rezk completion.}  
        Build the full \(\infty\)-categorical Rezk completion inside a
        fixed universe \(\mathcal U_{i}\) and verify coherence.
  \item \textbf{GF3 — Pointwise Kan $\Rightarrow$ Beck–Chevalley.}  
        Analyse how universe cumulativity interacts with
        pointwise Kan extensions and derive the Beck–Chevalley
        condition in the tower.
  \item \textbf{Model applications.}  
        Use the tower to construct
        \((\infty,1)\)-toposes of \(\mathcal U_{i}\)-small objects,
        providing the semantic backdrop for later volumes on
        higher‐dimensional proof theory.
  \item \textbf{Formal verification.}  
        Port the rank and closure lemmas to a proof assistant
        (Lean 4/HoTT) to guarantee metatheoretic consistency.
\end{itemize}

\noindent
These developments furnish the size infrastructure required for the
subsequent papers, where we turn to the categorical and homotopical
aspects of the Grothendieck universe tower.
\newpage

\end{document}